\newtheorem{theorem}{Theorem}
\newtheorem{prop}{Proposition}
\newtheorem{lemma}{Lemma}
\newtheorem{rem}{Remark}
\newtheorem{exmp}{Example}
\begin{document}
\title[Ortho-Grassmann graphs]
{Automorphisms and some geodesic properties of ortho-Grassmann graphs}
\author{Mark Pankov, Krzysztof Petelczyc, Mariusz \.Zynel}
%\subjclass[2020]{05E18, 47B15, 81P10}

\keywords{Grassmann graph, conjugacy class of finite rank self-adjoint operators,
graph automorphism, geodesic, commutativity}
\address{Mark Pankov: Faculty of Mathematics and Computer Science, 
University of Warmia and Mazury, S{\l}oneczna 54, 10-710 Olsztyn, Poland}
\email{pankov@matman.uwm.edu.pl}
\address{Krzysztof Petelczyc, Mariusz \.Zynel: Faculty of Mathematics, University of Bia{\l}ystok, Cio{\l}kowskiego 1 M, 15-245 Bia{\l}ystok, Poland}
\email{kryzpet@math.uwb.edu.pl, mariusz@math.uwb.edu.pl}

\maketitle

\begin{abstract}
Let $H$ be a complex Hilbert space. 
Consider the ortho-Grassmann graph $\Gamma^{\perp}_{k}(H)$
whose vertices are $k$-dimensional subspaces of $H$ (projections of rank $k$)
and two subspaces are connected by an edge in this graph if they are compatible and adjacent 
(the corresponding rank-$k$ projections commute and their difference is an operator of rank $2$).
Our main result is the following:
if $\dim H\ne 2k$, then every automorphism of $\Gamma^{\perp}_{k}(H)$ is
induced by a unitary or anti-unitary operator;
if $\dim H=2k\ge 6$, then every automorphism of $\Gamma^{\perp}_{k}(H)$
is induced by a unitary or anti-unitary operator
or it is the composition of such an automorphism and the orthocomplementary map.
For the case when $\dim H=2k=4$ the statement fails. 
To prove this statement we compare geodesics of length two in  ortho-Grassmann graphs
and characterise compatibility (commutativity) in terms of geodesics in 
Grassmann and ortho-Grassmann graphs.
At the end, we extend this result on generalised ortho-Grassmann graphs 
associated to conjugacy classes of finite-rank self-adjoint operators. 
\end{abstract}

\section{Introduction}

Classic Chow's  theorem \cite{Chow} states that all automorphisms of Grassmann graphs 
can be obtained from semi-linear automorphisms of 
the associated vector spaces or semi-linear isomorphisms to the dual vector spaces.
Recall that two vertices in the Grassmann graph formed by $k$-dimensional 
subspaces are connected by an edge if the corresponding subspaces are {\it adjacent}, i.e. 
their intersection is  $(k-1)$-dimensional.
There are results in spirit of Chow's theorem concerning automorphisms of polar Grassmann graphs 
\cite{Die,Pank-Grass} and analogues for matrix spaces \cite{Wan}.
Methods used to prove Chow-type theorems are based 
on a description of maximal cliques and their intersections in the corresponding graph. 

Recently, Chow's  theorem was successfully exploited in the discipline known as {\it preserver problems related to quantum mechanics} which provides various characterisations of quantum symmetries, i.e. 
unitary and anti-unitary operators \cite{GS,Geher,Pankov-book}.
The Grassmannian formed by $k$-dimensional subspaces of a complex Hilbert space $H$
can be identified with the conjugacy class of rank-$k$ projections and two such projections
are connected by an edge in the associated Grassmann graph if their difference is an operator of rank $2$
(i.e. the smallest possible).
The automorphisms of such Grassmann graphs cannot be characterised as the transformations induced by
unitary and anti-unitary operators and we look for a modification of this graph which will satisfy
that condition. 
Our choice is so-called {\it ortho-Grassmann graph}, where two subspaces are connected by an edge
if they are adjacent and compatible.
Then the corresponding projections commute and, as in the Grassmann graph,
their difference is an operator of rank $2$.

Recall that projections can be characterised as self-adjoint idempotents in the algebra of bounded operators. 
Observables in quantum mechanics are identified with (not necessarily bounded) self-adjoint operators  and two bounded observables are simultaneously observable if and only if the corresponding bounded self-adjoint operators commute (see, for example, \cite[Theorem 4.11]{Var}).
So, our ortho-adjacency relation is naturally interpreted in terms of quantum mechanics.

The main result (Theorem \ref{theorem-ortho-gr}) states that 
every automorphism of the ortho-Grass\-mann graph can be obtained from a unitary or anti-unitary operator
except the case when the graph consists of $2$-dimensional subspaces and $\dim H=4$.
A simple example shows that the statement fails for this case (Example \ref{exmp-4-2}).
In \cite{H} this statement is proved for the graph formed by $2$-dimensional subspaces.
There is an analogue of  our result for the set of all non-isotropic $k$-dimensional subspaces of a sesquilinear form under the assumption that the dimension of the associated vector space is not equal to $2k$ \cite{PZ}.
The method exploited in \cite{PZ} (a description of maximal cliques and their intersections)
does not work for the case when the graph is formed by subspaces whose dimension is the half of 
$\dim H$ (Section 3.3) and thus we  use completely different reasonings. 
Our proof is based on the comparison  of geodesics of length two in  ortho-Grassmann graphs
(Section 4) and a characterisation of compatibility (commutativity) in terms of geodesics in 
Grassmann and ortho-Grassmann graphs (Theorem \ref{theorem-comp}).

In quantum mechanics, rank-one projections, and consequently, $1$-dimensional subspaces
are identified with pure states \cite[Theorem 4.23]{Var}. 
By classic Wigner's theorem, all symmetries of pure states are induced by unitary or anti-unitary operators.
Observe that two $1$-dimensional subspaces are ortho-adjacent if and only if they are orthogonal,
i.e. for the Grassmannian of $1$-dimensional subspaces the ortho-adjacency and orthogonality relations
coincide; in this case, our result is  Uhlhorn's version of Wigner's theorem \cite{Uhlhorn}.
In the general case, the ortho-adjacency relation can be characterised in terms of principal angles as follows:
two subspaces are ortho-adjacent if and only if the principal angles between them are
$0,\dots,0,\pi/2$ (see, for example, \cite[Section 4.4]{Pankov-book}).
Transformations (not necessarily bijective) of Hilbert Grassmannians preserving the principal angles between any pair of subspaces are described in \cite{Molnar1, Molnar2}. 
Our result provides a description of bijections which preserve principal angles of type $0,\dots,0,\pi/2$.

There is an analogue of Chow's theorem 
for conjugacy classes of  finite-rank self-adjoint operators \cite{PPZ}. 
Using this statement and results from Section 4 
we extend the main result on such classes of operators (Theorem \ref{theorem-ortho-gr2}).

\section{Main result}
Let $V$ be a vector space over a field $F$.
For every integer $k$ satisfying $0<k<\dim H$ (if $H$ is infinite-dimensional, then for every integer $k>0$)
we denote by ${\mathcal G}_{k}(V)$ the Grassmannian formed by $k$-dimensional subspaces of $V$.
Two $k$-dimensional subspaces of  $V$ are called {\it adjacent} if their intersection is $(k-1)$-dimensional,
or equivalently, their sum is $(k+1)$-dimensional. 
In the case when $k=1$ or $\dim V=k+1$, any two distinct elements of ${\mathcal G}_{k}(V)$ are adjacent.
The {\it Grassmann graph} $\Gamma_k(V)$ is the simple graph whose vertex set is ${\mathcal G}_{k}(V)$ 
and two $k$-dimensional subspaces of $V$ are connected in this graph by an edge if they are adjacent.

Recall that a {\it semi-linear automorphism} of $V$ is a bijection $L:V\to V$ such that
$$L(x+y)=L(x)+L(y)$$
for all $x,y\in V$ and there is an automorphism $\sigma$ of the field $F$ satisfying
$$L(ax)=\sigma(a)L(x)$$
for all $a\in F$ and $x\in V$. 
Every semi-linear automorphism of $V$ induces an automorphism of $\Gamma_k(V)$.
If $\dim V=2k$ and $L:V\to V^{*}$ is a semi-linear isomorphism, then 
the map sending every $k$-dimensional subspace $X\subset V$ to $L(X)^0$ is an automorphism of $\Gamma_k(V)$
(for a subspace $Y$ of the dual vector space $V^{*}$ the annihilator $Y^{0}$ 
is formed by all $x\in V$ satisfying $y^{*}x=0$ for all $y^{*}\in Y$ and the dimension 
of $Y^{0}$ is equal to the codimension of $Y$).

\begin{theorem}[Chow \cite{Chow}]\label{theorem-chow}
Suppose that $k$ is an integer satisfying $1<k<\dim V- 1$ 
{\rm(}if $V$ is infinite-dimensional, then $k$ is an arbitrary integer greater than $1${\rm)}.
Then every automorphism of $\Gamma_k(V)$ is induced by a semi-linear automorphism 
of $V$ or a semi-linear isomorphism of $V$ to $V^{*}$ and 
the second possibility is realised only in the case when $\dim V=2k$.
\end{theorem}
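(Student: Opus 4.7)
The plan is to follow the classical ``maximal cliques'' strategy and reduce to the fundamental theorem of projective geometry. First I would classify the maximal cliques of $\Gamma_{k}(V)$. A routine argument shows that every maximal clique is of one of two types: a \emph{star} $[X)_{k}=\{Y\in{\mathcal G}_{k}(V):X\subset Y\}$ for some $X\in{\mathcal G}_{k-1}(V)$, or a \emph{top} $(Z]_{k}=\{Y\in{\mathcal G}_{k}(V):Y\subset Z\}$ for some $Z\in{\mathcal G}_{k+1}(V)$. Both families are non-empty under the hypothesis $1<k<\dim V-1$.

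Next I would compute how two distinct maximal cliques meet. Two distinct stars share at most one element, and similarly for two distinct tops; a star and a top meet either in the empty set or in a line of the Grassmann graph (a pair of adjacent subspaces, i.e.\ a clique of size equal to $\#F+1$ when $F$ is finite, or an affine line in general). This asymmetry forces every automorphism $f$ of $\Gamma_{k}(V)$ to permute the set of maximal cliques and to either preserve the partition ``stars versus tops'' or interchange the two families. The interchange is only possible if $\dim V=2k$, since otherwise the two families have different cardinalities (or, more intrinsically, different combinatorial structure detectable from the graph).

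In the ``stars to stars'' case I would extract from $f$ an induced bijection $f_{k-1}:{\mathcal G}_{k-1}(V)\to{\mathcal G}_{k-1}(V)$ and, dually from the action on tops, an induced bijection $f_{k+1}:{\mathcal G}_{k+1}(V)\to{\mathcal G}_{k+1}(V)$. Both induced maps preserve adjacency (two $(k-1)$-subspaces are adjacent precisely when their associated stars share a common element, and analogously for tops), and they are compatible with inclusions $X\subset Z$ with $\dim X=k-1$, $\dim Z=k+1$, in the sense that $f_{k-1}(X)\subset f_{k+1}(Z)$ iff $X\subset Z$. Iterating down to ${\mathcal G}_{1}(V)$ (or up to ${\mathcal G}_{\dim V-1}(V)$) one obtains a collineation of the projective space $\mathbb{P}(V)$ to itself, whence, by the fundamental theorem of projective geometry, a semi-linear automorphism $L$ of $V$ that induces $f$. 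In the ``stars to tops'' case, available only when $\dim V=2k$, the same extraction procedure yields a collineation from $\mathbb{P}(V)$ to its dual $\mathbb{P}(V^{*})$, whence a semi-linear isomorphism $V\to V^{*}$ realising $f$ via annihilators.

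The routine parts are the clique classification and the translation ``adjacency of stars $=$ adjacency of $(k-1)$-subspaces''. The main obstacle, and the place where the dimension hypothesis $1<k<\dim V-1$ is genuinely used, is ruling out a mixed behaviour of $f$ on maximal cliques (some stars going to stars, some going to tops) when $\dim V\ne 2k$; this requires a careful analysis of how triples of pairwise intersecting maximal cliques sit together, and it is also the step that produces the ``only if $\dim V=2k$'' part of the second alternative. Once this rigidity is in hand, the appeal to the fundamental theorem of projective geometry completes the proof.
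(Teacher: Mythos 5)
This is a classical result that the paper cites (Chow \cite{Chow}, with \cite[Section~2.4]{Pankov-book} for the infinite-dimensional case) rather than proves, and your plan is exactly the standard argument the authors themselves sketch in Section~3.3: classify maximal cliques as stars and tops, use the fact that a star--top intersection (a line, of size $\ge 3$) is strictly larger than any same-type intersection (size $\le 1$) to force an automorphism to preserve or globally swap the two types, and then descend to a collineation of ${\mathbb P}(V)$ (or of ${\mathbb P}(V)$ onto its dual when $\dim V=2k$) and invoke the fundamental theorem of projective geometry. Your proposal is correct in outline and essentially identical to the cited proof; the only point to tighten is that for infinite $F$ the ``different cardinalities'' remark must be replaced by the intrinsic comparison of the projective dimensions $\dim V-k$ and $k$ of a star and a top, as you yourself indicate parenthetically.
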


\begin{rem}{\rm
In \cite{Chow} this statement was obtained for finite-dimensional vector spaces only;
but it holds for the general case; see, for example, \cite[Section 2.4]{Pankov-book}.
}\end{rem}

Let $H$ be a complex Hilbert space of dimension not less than $3$.
Closed subspaces of $H$ can be naturally identified with projections,
i.e. self-adjoint idempotents in the algebra of all bounded operators. 
A closed subspace $X\subset H$ corresponds to the projection $P_X$ whose image is $X$. 
Then ${\mathcal G}_k(H)$ coincides with the conjugacy class of rank-$k$ projections ${\mathcal P}_k(H)$.
Two $k$-dimensional subspaces $X,Y\subset H$  are adjacent if and only if 
the difference of the corresponding projections $P_X - P_Y$ is of rank $2$, i.e. of minimal rank
(the difference of two self-adjoint operators from the same conjugacy class cannot be of rank $1$).
Two closed subspaces $X,Y\subset H$ are called {\it compatible} if there is an orthonormal basis of $H$
such that $X$ and $Y$ are spanned by subsets of this basis; this is equivalent to the fact that  
the projections $P_X$ and $P_Y$ commute.
We say that $k$-dimensional subspaces $X,Y\subset H$ are {\it ortho-adjacent} 
if they are adjacent and compatible, i.e. 
\begin{equation}\label{eq-prad}
{\rm rank}(P_X - P_Y)=2\;\mbox{ and }\;P_XP_Y=P_YP_X.
\end{equation}
Denote by $\Gamma^{\perp}_k(H)$ the simple graph whose vertex set is ${\mathcal G}_k(H)$
and two $k$-dimensional subspaces of $H$ are connected by an edge in this graph if they are ortho-adjacent.
Then $\Gamma^{\perp}_k(H)$
can be considered as the graph whose vertex set is ${\mathcal P}_k(H)$ and 
two rank-$k$ projections $P_X,P_Y$ are connected by an edge if the conditions \eqref{eq-prad} are satisfied.
In contrast to the Grassmann graph $\Gamma_k(H)$, for $k=1$ or $\dim H=k+1$
the graph $\Gamma^{\perp}_k(H)$ contains pairs of distinct vertices which are not connected by an edge.
In the next section, we show that this graph is connected.

An invertible linear operator on $H$ is {\it unitary} if it preserves the inner product.
An invertible conjugate-linear operator $U$ on $H$
(i.e. satisfying $U(x+y)=U(x)+U(y)$ and $U(ax)=\overline{a}U(x)$ for all $x,y\in H$ and all scalars $a$)
is called {\it anti-unitary} if
$$\langle U(x),U(y)\rangle=\overline{\langle x,y \rangle}$$
for all $x,y\in H$.
Unitary and anti-unitary operators induce automorphisms of the graph $\Gamma^{\perp}_k(H)$.
If $\dim H=n$ is finite, then the orthocomplementary map 
sending every $k$-dimensional subspace $X\subset H$ to the orthogonal complement $X^{\perp}$
is an isomorphism between $\Gamma^{\perp}_{k}(H)$ and $\Gamma^{\perp}_{n-k}(H)$.

Our main result is the following.

\begin{theorem}\label{theorem-ortho-gr}
If $\dim H\ne 2k$ {\rm(}in particular, if $H$ is infinite-dimensional{\rm)},
then every automorphism of the graph $\Gamma^{\perp}_{k}(H)$ is
induced by a unitary or anti-unitary operator. 
In the case when $\dim H=2k\ge 6$, every automorphism of $\Gamma^{\perp}_{k}(H)$
is induced by a unitary or anti-unitary operator
or it is the composition of such an automorphism and the orthocomplementary map.
\end{theorem}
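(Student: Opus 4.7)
The plan is to reduce Theorem \ref{theorem-ortho-gr} to Chow's theorem (Theorem \ref{theorem-chow}) combined with a Wigner-type rigidity argument. I would first show that any automorphism $f$ of $\Gamma_{k}^{\perp}(H)$ preserves both compatibility and ordinary adjacency of $k$-dimensional subspaces, using the comparison of length-two geodesics in ortho-Grassmann graphs carried out in Section 4 together with the geodesic characterisation of compatibility promised in Theorem \ref{theorem-comp}. Since those characterisations are stated purely in terms of the ortho-Grassmann edge relation, any graph automorphism of $\Gamma_{k}^{\perp}(H)$ automatically induces an automorphism of the ordinary Grassmann graph $\Gamma_{k}(H)$ that additionally preserves compatibility of arbitrary pairs of subspaces.

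Once this is done, Chow's theorem applies: $f$ is induced either by a semi-linear automorphism $L:H\to H$, or (only when $\dim H=2k$) by a semi-linear isomorphism $L:H\to H^{*}$ through $X\mapsto L(X)^{0}$. I would then exploit the additional preservation of compatibility to force $L$ to respect the Hermitian structure. Compatibility of arbitrary pairs of $k$-dimensional subspaces translates, by varying over all orthogonal decompositions of $H$ into $k$- and $(\dim H-k)$-dimensional summands, into the condition that $L$ sends orthonormal bases to orthonormal bases up to scalars; a standard argument then forces the associated field automorphism to be the identity or complex conjugation and $L$ itself to be proportional to a unitary or anti-unitary operator. In the $\dim H=2k$ branch, identifying $H^{*}$ with $H$ via the inner product turns the annihilator $L(X)^{0}$ into the orthogonal complement of a unitarily or anti-unitarily transported subspace, which is precisely the composition with the orthocomplementary map that appears in the statement.

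The main obstacle, and the essential novelty compared with the classical Chow-type situation, is the first step. Ortho-adjacency is strictly stronger than adjacency, so passing from $\Gamma_{k}^{\perp}(H)$ back to $\Gamma_{k}(H)$ is not automatic: one must distinguish combinatorially between adjacent-and-compatible, adjacent-and-non-compatible, and non-adjacent-but-compatible pairs using only edges of $\Gamma_{k}^{\perp}(H)$. This is exactly what the length-two geodesic invariants of Section 4 are designed to accomplish, and it is precisely the degeneration of these invariants in the exceptional case $\dim H = 2k = 4$ that both forces the exclusion of this case (witnessed by Example \ref{exmp-4-2}) and motivates the hypothesis $2k\ge 6$ in the $\dim H=2k$ branch of the theorem.
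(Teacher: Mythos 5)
Your proposal follows essentially the same route as the paper: length-two geodesic counts in $\Gamma^{\perp}_{k}(H)$ to recover ordinary adjacency, the geodesic characterisation of compatibility (Theorem \ref{theorem-comp}) to recover orthogonality, and then Chow's theorem together with the standard rigidity argument forcing the semilinear map to be a scalar multiple of a unitary or anti-unitary operator (with the annihilator becoming the orthocomplement when $\dim H=2k$). The only point not covered is the case $k=1$, where Chow's theorem does not apply and one invokes Uhlhorn's theorem instead; the paper handles this by a preliminary reduction to $4\le 2k\le\dim H$.
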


\begin{rem}\label{rem-theorem2}{\rm
If $X$ is a $k$-dimensional subspace of $H$, then $P_{X^{\perp}}={\rm Id}-P_{X}$ and
$$P_{U(X)}=UP_{X}U^{*}$$
for every unitary or anti-unitary operator $U$ on $H$.
Consider $\Gamma^{\perp}_{k}(H)$ as the graph whose vertices are rank-$k$ projections.
Then Theorem \ref{theorem-ortho-gr} can be reformulated as follows.
If $\dim H\ne 2k$ and $f$ is an automorphism of $\Gamma^{\perp}_{k}(H)$, then there is 
a unitary or anti-unitary operator $U$ on $H$ such that
\begin{equation}\label{eq-unitary}
f(P)=UPU^{*}
\end{equation}
for all $P\in {\mathcal P}_{k}(H)$.
If $\dim H=2k\ge 6$ and $f$ is an automorphism of $\Gamma^{\perp}_{k}(H)$, then there is 
a unitary or anti-unitary operator $U$ on $H$ such that \eqref{eq-unitary}
holds for all $P\in {\mathcal P}_{k}(H)$ or we have
$$f(P)=U({\rm Id}-P)U^{*}$$
for every $P\in {\mathcal P}_{k}(H)$.
}\end{rem}

The above statement fails if $\dim H=2k=4$, see Example \ref{exmp-4-2}.

Two $1$-dimensional subspaces of $H$ are ortho-adjacent if and only if they are orthogonal. 
So, Theorem \ref{theorem-ortho-gr} coincides with 
Uhlhorn's version of Wigner's theorem concerning bijective transformations of ${\mathcal G}_{1}(H)$
preserving the orthogonality relation in both directions \cite{Uhlhorn}.
Therefore, it is sufficient to prove Theorem \ref{theorem-ortho-gr} only for the case when 
$4\le 2k\le \dim H$.
Indeed, if $\dim H=n$ is finite and $2k>n$, then
for every automorphism $f$ of $\Gamma^{\perp}_{k}(H)$ 
the map sending every $(n-k)$-dimensional subspace $X\subset H$
to $(f(X^{\perp}))^{\perp}$ is an automorphism of $\Gamma^{\perp}_{n-k}(H)$;
if this automorphism is induced by a unitary or anti-unitary operator,
then $f$ is induced by the same operator.

Our proof of  Theorem \ref{theorem-ortho-gr}  is based on
a characterisation  of adjacency in terms of ortho-adjacency (Section 4)
which  immediately implies that every automorphism $f$ of $\Gamma^{\perp}_{k}(H)$ is an automorphism of 
$\Gamma_{k}(H)$ (except the case when $\dim H=2k=4$).
It follows from Theorem \ref{theorem-chow} that  there is a semi-linear automorphism $L:H\to H$
such that one of the following possibilities is realised: 
\begin{enumerate}
\item[$\bullet$] $f(X)=L(X)$ for every $k$-dimensional subspace $X\subset H$,
\item[$\bullet$] $\dim H=2k$ and $f(X)=L(X)^{\perp}$ for every $k$-dimensional subspace $X\subset H$.
\end{enumerate}
We cannot immediately assert that $L$ is a scalar multiple of a unitary or anti-unitary operator.
This follows from our characterisation of compatibility in terms of geodesics in 
the graphs  $\Gamma^{\perp}_{k}(H)$ and $\Gamma_{k}(H)$
(Theorem \ref{theorem-comp}).

\begin{exmp}\label{exmp-4-2}{\rm
Suppose that $\dim H=4$. 
Then for any ortho-adjacent $2$-dimensional subspaces $X,Y\subset H$
the subspaces $X,Y^{\perp}$ are also ortho-adjacent
(we take any orthonormal basis of $H$ whose subsets span $X,Y$ and observe that
$Y^{\perp}$ also is spanned by a subset of this basis).
Let us fix a certain $2$-dimensional subspace $X\subset H$
and consider the transformation of ${\mathcal G}_{2}(H)$ which 
transposes  $X,X^{\perp}$ and leaves fixed all other $2$-dimensional subspaces of $H$.
This is an automorphism of $\Gamma^{\perp}_{2}(H)$.
More generally, we can take any subset ${\mathcal X}\subset {\mathcal G}_{2}(H)$
such that $X^{\perp}\in {\mathcal X}$ for all $X\in {\mathcal X}$
and the transformation of ${\mathcal G}_{2}(H)$ which sends all $X\in {\mathcal X}$
to $X^{\perp}$ and leaves all elements of ${\mathcal G}_{2}(H)\setminus {\mathcal X}$ fixed.
As above, we obtain an automorphism of $\Gamma^{\perp}_{2}(H)$.
Our conjecture is the following: if $\dim H=4$ and $f$ is an automorphism of $\Gamma^{\perp}_{2}(H)$,
then there is a unitary or anti-unitary operator $U$ such that for every $2$-dimensional subspace $X\subset H$
we have $f(X)=U(X)$ or $f(X)=U(X)^{\perp}$.
}\end{exmp}

\section{Grassmann graphs and ortho-Grassmann graphs}

\subsection{Some elementary properties}
Recall that the {\it distance} $d(v,w)$ between two vertices $v,w$ in a connected simple graph 
is defined as the smallest number $m$ such that there is a path of length $m$ 
(i.e. a path which consists of $m$ edges) 
connecting these vertices.
A path connecting  $v$ and $w$ is called a {\it geodesic} if its length is $d(v,w)$. 

We will use some basic properties of the distance in Grassmann graphs 
(see, for example, \cite[Section 2.3]{Pankov-book}).
The Grassmann graph $\Gamma_{k}(H)$ is connected;
furthermore, for any $k$-dimensional subspaces $X,Y\subset H$ the following assertions are fulfilled:
\begin{enumerate}
\item[(D1)] the distance between $X$ and $Y$ in $\Gamma_k(H)$
is equal to $k-\dim(X\cap Y)$;
\item[(D2)] every $k$-dimensional subspace in a geodesic of $\Gamma_k(H)$ connecting $X$ with $Y$
contains $X\cap Y$ and is contained in $X+Y$;
\item[(D3)] if the distance between $X$ and $Y$ is equal to $m$
and $Z$ is a $k$-dimensional subspace of $H$ such that the distances from $Z$ to $X$ and $Y$ are equal to
$i$ and $m-i$ (respectively), then there is a geodesic in $\Gamma_k(H)$ connecting $X$ with $Y$
and containing $Z$.
\end{enumerate}
If $X,Y,Z$ are mutually adjacent $k$-dimensional subspaces of $H$, 
then at least one of the following possibilities is realised: 
\begin{enumerate}
\item[(T1)] $X\cap Y\cap Z$ is $(k-1)$-dimensional,
\item[(T2)] $X+Y+Z$ is $(k+1)$-dimensional.
\end{enumerate}
Note that there are triples of $k$-dimensional subspaces $X,Y,Z$ satisfying both (T1) and (T2).
In the case when (T1) fails and (T2) holds, $X\cap Y\cap Z$ is $(k-2)$-dimensional and $k>1$.
If (T1) holds and (T2) fails, then $X+Y+Z$ is $(k+2)$-dimensional and $k<\dim H-1$.
In the case when $1<k<\dim H-1$, there are precisely two types of maximal cliques in $\Gamma_{k}(H)$:
\begin{enumerate}
\item[$\bullet$] the {\it star} ${\mathcal S}(S)$, $S\in {\mathcal G}_{k-1}(H)$
which consists of all $k$-dimensional subspaces containing $S$;
\item[$\bullet$] the {\it top} ${\mathcal G}_{k}(U)$, $U\in {\mathcal G}_{k+1}(H)$.
\end{enumerate}
Every automorphism of $\Gamma_{k}(H)$ preserves types of maximal cliques
(stars go to stars and tops go to tops) or it sends all stars to tops and all tops to stars.
The first possibility is realised if and only if this automorphism is induced by 
a semi-linear automorphism of $H$. See \cite[Section 2.4]{Pankov-book} for the details.

Now, we describe all possible connections in $\Gamma^{\perp}_k(H)$ between
two non-compatible adjacent $k$-dimension subspaces.

\begin{lemma}\label{lemma-ad-1}
Let $X,Y$ be adjacent $k$-dimensional subspaces of $H$ and $k<\dim H-1$.
A $k$-dimensional subspace $Z\not\subset X+Y$ is ortho-adjacent to both $X,Y$ if and only if 
$Z = (X\cap Y) + P$ for some $1$-dimensional subspace $P$ orhogonal to $X+Y$.
\end{lemma}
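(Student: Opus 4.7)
The plan is to handle the two directions separately, with the forward direction being the substantive one.

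For the easy direction ($\Leftarrow$), I would verify all four conditions directly. If $Z=(X\cap Y)+P$ with $P$ one-dimensional and orthogonal to $X+Y$, then $P\cap X = P\cap Y = 0$ forces $Z\cap X = Z\cap Y = X\cap Y$, giving the two adjacency conditions. For compatibility of $X$ and $Z$, I would start from an orthonormal basis $e_1,\dots,e_{k-1}$ of $X\cap Y$, extend by a single unit vector $e_k\in X$ orthogonal to $X\cap Y$ to get a basis of $X$, and append a unit vector $p\in P$; since $p\perp X+Y\supset X\cap Y$ and $p\perp e_k$ (because $e_k\in X$), the set $\{e_1,\dots,e_k,p\}$ is orthonormal, and both $X$ and $Z$ are spanned by subsets of it. Extending to an orthonormal basis of $H$ gives compatibility of $X$ and $Z$; the argument for $Y$ and $Z$ is identical. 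Finally $Z\not\subset X+Y$ because $p\in Z\setminus(X+Y)$.

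For the main direction ($\Rightarrow$), the first step is a dimension count to pin down the intersections. Since $Z\not\subset X+Y$, we have $\dim(Z\cap(X+Y))\le k-1$. But $Z\cap X$ and $Z\cap Y$ both sit inside $Z\cap(X+Y)$ and each already has dimension $k-1$ (by adjacency). Hence $Z\cap X = Z\cap Y = Z\cap(X+Y)$, and this common subspace is contained in $X\cap Y$; comparing dimensions it must equal $X\cap Y$.

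The second step extracts a vector spanning $Z$ modulo $X\cap Y$ using compatibility. The compatibility of $P_X$ and $P_Z$ gives the orthogonal decomposition $Z=(Z\cap X)\oplus(Z\cap X^{\perp})$, so $Z\cap X^{\perp}$ is one-dimensional; let it be spanned by $z$. The analogous argument with $Y$ shows that $Z\cap Y^{\perp}$ is also one-dimensional. Now both $Z\cap X^{\perp}$ and $Z\cap Y^{\perp}$ are one-dimensional subspaces of $Z$ orthogonal to $X\cap Y$ (since $X\cap Y\subset X\cap Y\subset X$ and $\subset Y$), hence both lie in the one-dimensional orthogonal complement of $X\cap Y$ inside $Z$. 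So they coincide, giving $z\in X^{\perp}\cap Y^{\perp}=(X+Y)^{\perp}$. Setting $P=\mathbb{C}z$ yields $Z=(X\cap Y)+P$ with $P\perp X+Y$, as required.

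I do not expect any real obstacle; the only thing to watch is using the commutativity of $P_X$ and $P_Z$ in the correct form (the four-block decomposition of $H$) so that $Z\cap X^{\perp}$ is genuinely a direct complement of $Z\cap X$ in $Z$. Once that is in hand, the two one-dimensional subspaces $Z\cap X^{\perp}$ and $Z\cap Y^{\perp}$ are forced to coincide for dimension reasons, and the conclusion follows immediately.
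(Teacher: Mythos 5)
Your proof is correct and follows essentially the same route as the paper's: the paper gets $X\cap Y\subset Z$ from the (T1)/(T2) trichotomy for mutually adjacent triples where you use a direct dimension count on $Z\cap(X+Y)$, and the paper's assertion that the orthogonal complement of $X\cap Y$ in $Z$ is orthogonal to both $X$ and $Y$ is exactly your compatibility decomposition $Z=(Z\cap X)\oplus(Z\cap X^{\perp})$ made explicit. No gaps.
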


\begin{proof}
The condition $k<\dim H-1$ implies that $X+Y$ is a proper subspace of $H$.
For every $1$-dimensional subspace $P\subset (X+Y)^{\perp}$
the $k$-dimensional subspace $P+(X\cap Y)$ is ortho-adjacent to both $X,Y$.
Conversely, if $Z$ is a $k$-dimensional subspace ortho-adjacent to both $X,Y$ and $Z\not\subset X+Y$,
then $Z$ contains $X\cap Y$ as a hyperplane (the case (T1)) and the $1$-dimensional  orthogonal complement of $X\cap Y$ in $Z$
is orthogonal to both $X,Y$, and consequently, to $X+Y$.
\end{proof}

\begin{lemma}\label{lemma-ad-2}
Let $X,Y$ be adjacent $k$-dimensional subspaces of $H$ with $k>1$
and let $S$ be the $2$-dimensional orthogonal complement of $X\cap Y$ in $X+Y$.
A $k$-dimensional subspace $Z\subset X+Y$ is ortho-adjacent to both $X,Y$ if and only if 
$Z = S+W$ for some $(k-2)$-dimensional subspace $W\subset X\cap Y$.
\end{lemma}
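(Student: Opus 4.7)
My plan is to exploit the orthogonal decomposition $X+Y=(X\cap Y)\oplus^{\perp} S$ and reduce both directions to a direct calculation.

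\textbf{Setup.} Adjacency gives $\dim(X\cap Y)=k-1$ and $\dim(X+Y)=k+1$, so $\dim S=2$. Choose vectors $u,u'\in S$ with $X=(X\cap Y)\oplus Fu$ and $Y=(X\cap Y)\oplus Fu'$; since $X\ne Y$, these are linearly independent and therefore span $S$. In particular, from $X+Y=(X\cap Y)\oplus^{\perp} S$ one reads off that $(X+Y)\cap X^{\perp}$ is the $1$-dimensional subspace $S\cap u^{\perp}$, and similarly $(X+Y)\cap Y^{\perp}=S\cap(u')^{\perp}$. Throughout I will use the standard fact that $Z$ is compatible with $X$ if and only if $Z=(Z\cap X)\oplus^{\perp}(Z\cap X^{\perp})$ (in the presence of this decomposition $P_{X}P_{Z}=P_{Z\cap X}$ is self-adjoint, hence equal to $P_{Z}P_{X}$).

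\textbf{Sufficiency.} Assume $Z=S+W$ with $W\subset X\cap Y$, $\dim W=k-2$. Then $\dim Z=k$ and an easy intersection computation inside the decomposition $X+Y=(X\cap Y)\oplus^{\perp} S$ gives $Z\cap X=W+Fu$, of dimension $k-1$, so $Z$ is adjacent to $X$. Moreover $Z\cap X^{\perp}\subset(X+Y)\cap X^{\perp}=S\cap u^{\perp}$, and the vector in $S\cap u^{\perp}$ lies in $Z\supset S$, so $Z\cap X^{\perp}=S\cap u^{\perp}$ is $1$-dimensional. Adding dimensions, $Z=(Z\cap X)\oplus^{\perp}(Z\cap X^{\perp})$, which is compatibility. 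Symmetrically $Z$ is ortho-adjacent to $Y$.

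\textbf{Necessity.} Conversely assume $Z\subset X+Y$ is ortho-adjacent to both $X$ and $Y$. Compatibility with $X$ together with $\dim(Z\cap X)=k-1$ forces $Z\cap X^{\perp}$ to be $1$-dimensional; but $Z\cap X^{\perp}\subset(X+Y)\cap X^{\perp}=S\cap u^{\perp}$, and both sides are $1$-dimensional, so $Z\cap X^{\perp}=S\cap u^{\perp}$. By the same argument $Z\cap Y^{\perp}=S\cap(u')^{\perp}$. Linear independence of $u,u'$ implies these two lines in $S$ are distinct, hence their sum is all of $S$; since both are contained in $Z$, we obtain $S\subset Z$. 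Setting $W:=Z\cap S^{\perp}$, we have $Z=S\oplus^{\perp}W$ with $\dim W=k-2$, and $W\subset(X+Y)\cap S^{\perp}=X\cap Y$, as required.

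\textbf{Where the work lies.} There is no real obstacle: once one records the orthogonal decomposition $X+Y=(X\cap Y)\oplus^{\perp}S$ and identifies $u,u'$, every step reduces to bookkeeping. The only subtle point worth stating explicitly is that both orthogonal complements of $X$ and of $Y$ inside $Z$ are forced to lie in the $2$-dimensional subspace $S$; once they are seen to be distinct lines there, they span $S$ and the containment $S\subset Z$ falls out, after which $W:=Z\cap S^{\perp}$ is the only possible candidate for the complementary piece.
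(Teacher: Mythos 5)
Your proof is correct and follows essentially the same route as the paper's: both arguments hinge on the two lines $(X+Y)\cap X^{\perp}$ and $(X+Y)\cap Y^{\perp}$, which lie in $S$, are distinct, and must be contained in any $Z\subset X+Y$ ortho-adjacent to both $X$ and $Y$. You merely make explicit the compatibility criterion $Z=(Z\cap X)\oplus^{\perp}(Z\cap X^{\perp})$ that the paper uses implicitly.
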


\begin{proof}
For every $(k-2)$-dimension subspace $W\subset X\cap Y$ the $k$-dimensional subspace
$W+S$ is ortho-adjacent to both $X,Y$.
Indeed, $X,Y,W+S$ are hyperplanes in $X+Y$ and $W+S$ contains 
the $1$-dimensional subspaces 
\begin{equation}\label{eq-2}
X^{\perp}\cap(X+Y)\mbox{ and }Y^{\perp}\cap(X+Y)
\end{equation}
orthogonal to 
$X$ and $Y$, respectively.
If a $k$-dimensional subspace $Z\subset X+Y$ is ortho-adjacent to both $X,Y$,
then it contains the $1$-dimensional subspaces \eqref{eq-2}, and consequently, their sum $S$.
This implies that $Z$ intersects $X\cap Y$ in a $(k-2)$-dimensional subspace.
\end{proof}

\begin{prop}
The graph $\Gamma^{\perp}_k(H)$ is connected.
\end{prop}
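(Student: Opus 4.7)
My plan is to reduce the connectedness of $\Gamma^{\perp}_k(H)$ to the connectedness of the ordinary Grassmann graph $\Gamma_k(H)$, which is recorded at the beginning of this section. Concretely, I will show that any two adjacent $k$-dimensional subspaces $X,Y$ have distance at most $2$ in $\Gamma^{\perp}_k(H)$; stringing together such detours along any path in $\Gamma_k(H)$ connecting two given vertices immediately produces a path between them in $\Gamma^{\perp}_k(H)$.

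To produce a common ortho-neighbour $Z$ of two adjacent $k$-dimensional subspaces $X$ and $Y$, I would split on the dimension. If $k<\dim H-1$, Lemma \ref{lemma-ad-1} produces such a $Z$: take any unit vector $e\in (X+Y)^{\perp}$ and set $Z=(X\cap Y)+\mathbb{C}e$. If $k>1$, Lemma \ref{lemma-ad-2} produces an alternative $Z$ inside $X+Y$: let $S$ be the $2$-dimensional orthogonal complement of $X\cap Y$ in $X+Y$, choose any $(k-2)$-dimensional $W\subset X\cap Y$, and set $Z=S+W$. Since we assume $\dim H\geq 3$, for every admissible $k$ with $1\leq k\leq \dim H-1$ at least one of the conditions $k<\dim H-1$ and $k>1$ is satisfied, so a suitable $Z$ always exists; in particular the two boundary cases $k=1$ and $k=\dim H-1$ (where all distinct $k$-subspaces are automatically adjacent in $\Gamma_k(H)$) are covered.

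The argument is then essentially formal: given arbitrary $X_0,X_m\in {\mathcal G}_k(H)$, pick any path $X_0,X_1,\ldots,X_m$ in $\Gamma_k(H)$ whose existence is guaranteed by property (D1) and the fact that $\Gamma_k(H)$ is connected, and replace each edge $X_{i-1}X_i$ that fails to be ortho-adjacent by the length-two detour $X_{i-1},Z_i,X_i$ produced above. The only genuine obstacle is the bookkeeping at the dimensional boundaries, which is handled by the observation of the previous paragraph; no new geometric input beyond Lemmas \ref{lemma-ad-1} and \ref{lemma-ad-2} is needed.
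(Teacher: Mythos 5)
Your proposal is correct and follows essentially the same route as the paper: both reduce to the connectedness of $\Gamma_k(H)$ and use Lemmas \ref{lemma-ad-1} and \ref{lemma-ad-2} to replace each non-ortho-adjacent edge by a length-two detour through a common ortho-neighbour. Your explicit check that at least one of $k>1$, $k<\dim H-1$ always holds is a welcome detail the paper leaves implicit.
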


\begin{proof}
By Lemmas \ref{lemma-ad-1} and \ref{lemma-ad-2},
for any two non-compatible adjacent $k$-dimensional subspaces of $H$
there is a $k$-dimensional subspace ortho-adjacent to each of them.
The required statement is a consequence of the fact that the Grassmann graph $\Gamma_k(H)$ is connected.
\end{proof}

The distance between two non-compatible adjacent $k$-dimensional subspaces of $H$
in $\Gamma^{\perp}_{k}(H)$ is equal to $2$. 
The following observation concerns the number of geodesics in $\Gamma^{\perp}_{k}(H)$
connecting such subspaces.

\begin{lemma}\label{lemma-adnoncom}
If $k$-dimensional subspaces $X,Y\subset H$ are adjacent and non-compatible,
then the number of geodesics in $\Gamma^{\perp}_{k}(H)$ connecting $X$ and $Y$ is infinite
except the case when $\dim H=2k=4$.
In this exceptional case, there are precisely $2$ such geodesics.
\end{lemma}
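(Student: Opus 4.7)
The plan is to identify each length-$2$ geodesic from $X$ to $Y$ with its middle vertex. Since $X,Y$ are adjacent but non-compatible, they are not ortho-adjacent, so $d(X,Y)\ge 2$ in $\Gamma^{\perp}_{k}(H)$; Lemmas \ref{lemma-ad-1} and \ref{lemma-ad-2} each produce common ortho-neighbours of $X$ and $Y$, so in fact $d(X,Y)=2$, and the geodesics correspond bijectively to those $Z\in\mathcal{G}_{k}(H)$ that are ortho-adjacent to both $X$ and $Y$. These common neighbours split disjointly into two families according to whether $Z\not\subset X+Y$ or $Z\subset X+Y$.

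By Lemma \ref{lemma-ad-1}, the family with $Z\not\subset X+Y$ is parametrised bijectively by the $1$-dimensional subspaces $P$ of $(X+Y)^{\perp}$, a complex subspace of dimension $\dim H-k-1$. By Lemma \ref{lemma-ad-2}, the family with $Z\subset X+Y$ is parametrised bijectively by the $(k-2)$-dimensional subspaces $W$ of $X\cap Y$, a complex subspace of dimension $k-1$.

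Over $\mathbb{C}$, the collection of $1$-dimensional (or, dually, of hyperplane) subspaces of a space of dimension $n$ is infinite when $n\ge 2$, a singleton when $n=1$, and empty when $n=0$. So the first family is infinite if $\dim H\ge k+3$ and a singleton if $\dim H=k+2$, while the second family is infinite if $k\ge 3$ and a singleton if $k=2$. Under the standing reduction $4\le 2k\le\dim H$ one has $k\ge 2$ and $\dim H\ge k+2$, so both families are non-empty; they are simultaneously finite exactly when $k=2$ and $\dim H=k+2$, i.e.\ the case $\dim H=2k=4$, yielding the geodesic count $1+1=2$. In every other case at least one family is infinite, and hence so is the total. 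No serious obstacle arises: the geometric content is already packaged in Lemmas \ref{lemma-ad-1} and \ref{lemma-ad-2}, and the argument reduces to tracking when each of the two parameter dimensions collapses to $1$.
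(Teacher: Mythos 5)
Your proof is correct and follows essentially the same route as the paper: both decompose the common ortho-neighbours (equivalently, the middle vertices of the length-$2$ geodesics) via Lemmas \ref{lemma-ad-1} and \ref{lemma-ad-2} into the two disjoint families $Z\not\subset X+Y$ and $Z\subset X+Y$, and then count each family through its parametrisation. Your version is merely more explicit in tying the count to the standing reduction $4\le 2k\le\dim H$, which is in fact needed for the ``all remaining cases are infinite'' conclusion (for instance $k=1$, $\dim H=3$ would yield a single geodesic), whereas the paper leaves this hypothesis implicit.
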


\begin{proof}
If $Z$ is a $k$-dimensional subspace ortho-adjacent to both $X,Y$, 
then one of the following possibilities is realised:
\begin{enumerate}
\item[(1)] $k<\dim H-1$ and $Z$ is the sum of $X\cap Y$ and a $1$-dimensional subspace orthogonal to $X+Y$,
\item[(2)] $k>1$ and $Z$ is the sum of a $(k-2)$-dimensional subspace of $X\cap Y$
and the $2$-dimensional orthogonal complement of $X\cap Y$ in $X+Y$.
\end{enumerate}
If $\dim H=2k=4$, then there is precisely one $Z$ satisfying (1) and 
precisely one satisfying (2). 
For the remaining cases there are infinitely many $Z$ satisfying at least one of the conditions
(1), (2).
\end{proof}

\subsection{A characterisation of compatibility in terms of geodesics}
Every geodesic of $\Gamma^{\perp}_k(H)$ is a geodesic in $\Gamma_k(H)$, but the converse fails. 
If $X,Y$ are compatible $k$-dimensional subspaces of $H$, 
then every geodesic of $\Gamma_k(H)$ connecting $X$ and $Y$ is a geodesic in $\Gamma^{\perp}_k(H)$; 
furthermore, any two $k$-dimensional subspaces in such a geodesic
are compatible; see \cite[Lemma 4.31]{Pankov-book}.

\begin{theorem}\label{theorem-comp}
Two $k$-dimensional subspaces $X,Y\subset H$ are compatible if and only if 
every geodesic of $\Gamma_k(H)$ connecting $X$ and $Y$ is a geodesic in $\Gamma^{\perp}_k(H)$.
\end{theorem}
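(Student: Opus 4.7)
The forward direction is recalled from \cite[Lemma 4.31]{Pankov-book} immediately before the theorem, so my plan is to prove the converse by contraposition: assuming $X$ and $Y$ are not compatible, I will exhibit a $\Gamma_k(H)$-geodesic from $X$ to $Y$ which is not a $\Gamma^{\perp}_k(H)$-geodesic.

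First I would reduce to the transversal situation. Set $W = X \cap Y$, $U = X + Y$, $d = k - \dim W$, and $V = U \cap W^\perp$, which is a $2d$-dimensional Hilbert space. Property (D2) forces every vertex $Z$ of a $\Gamma_k(H)$-geodesic from $X$ to $Y$ to satisfy $W \subset Z \subset U$, and so to split orthogonally as $Z = W \oplus Z^\circ$ with $Z^\circ := Z \cap W^\perp$ a $d$-dimensional subspace of $V$. The identity $P_Z = P_W + P_{Z^\circ}$, together with $P_W P_{Z^\circ} = 0 = P_{Z^\circ} P_W$ (since $Z^\circ \subset W^\perp$), shows that both adjacency and compatibility of consecutive vertices $Z_i, Z_{i+1}$ in $H$ are equivalent to the corresponding properties of $Z_i^\circ, Z_{i+1}^\circ$ in $V$. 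Consequently the map $Z \mapsto Z \cap W^\perp$ is a bijection between $\Gamma_k(H)$-geodesics from $X$ to $Y$ and $\Gamma_d(V)$-geodesics from $X^\circ := X \cap W^\perp$ to $Y^\circ := Y \cap W^\perp$, and preserves the property of being a $\Gamma^{\perp}$-geodesic. Since $X^\circ \cap Y^\circ = 0$ and $X^\circ + Y^\circ = V$, compatibility of $X, Y$ in $H$ here collapses to $X^\circ \perp Y^\circ$ in $V$.

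Now I would assume $X^\circ \not\perp Y^\circ$, pick unit vectors $x_0 \in X^\circ$ and $y_0 \in Y^\circ$ with $\langle x_0, y_0 \rangle \neq 0$, and build a geodesic from a tailored pair of nested flags. Take $A_1$ to be the orthogonal complement of $\langle x_0 \rangle$ inside $X^\circ$ and $B_1 = \langle y_0 \rangle$; extend to arbitrary complete flags $X^\circ = A_0 \supsetneq A_1 \supsetneq \cdots \supsetneq A_d = 0$ and $0 = B_0 \subsetneq B_1 \subsetneq \cdots \subsetneq B_d = Y^\circ$, and set $Z_i^\circ := A_i + B_i$. Transversality gives $\dim Z_i^\circ = d$, and $Z_i^\circ \cap Z_{i+1}^\circ = A_{i+1} + B_i$ has dimension $d-1$, so $X^\circ = Z_0^\circ, Z_1^\circ, \ldots, Z_d^\circ = Y^\circ$ is a $\Gamma_d(V)$-geodesic.

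Finally, I would verify that the very first edge $X^\circ, Z_1^\circ$ fails ortho-adjacency. Using the elementary fact that two adjacent subspaces sharing a hyperplane $T$ are compatible if and only if the orthogonal complements of $T$ inside each of them are mutually orthogonal, the question reduces to whether $\langle x_0, y_0 - P_{A_1} y_0 \rangle = 0$; but $x_0 \perp A_1$ kills the second term and leaves $\langle x_0, y_0 \rangle \neq 0$. Lifting by $Z_i := W + Z_i^\circ$ then produces the required $\Gamma_k(H)$-geodesic. I expect the main technical hurdle to be the reduction step, where one must carefully verify that ortho-adjacency truly descends to $V$; once this bookkeeping is accepted, the construction itself is a short direct exploitation of a single non-orthogonal pair of vectors.
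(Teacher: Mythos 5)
Your proposal is correct, and while it shares the paper's reduction to the transversal situation (passing to $V=(X+Y)\cap(X\cap Y)^{\perp}$ and observing that adjacency, ortho-adjacency and compatibility all descend through $Z\mapsto Z\cap(X\cap Y)^{\perp}$ because $P_Z=P_{X\cap Y}+P_{Z^{\circ}}$), it handles the core transversal case by a genuinely different argument. The paper proves the implication directly: for every line $P\subset Y$ and hyperplane $N\subset X$ it places $P+N$ on a geodesic via property (D3), extracts from the assumed ortho-adjacency a line $Q=(P+N)\cap X'$, and then runs a contradiction argument (the $Q+Q'$ step) to force $Q=P$, concluding $Y=X'$. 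You instead argue by contraposition: from a single non-orthogonal pair $x_0\in X^{\circ}$, $y_0\in Y^{\circ}$ you build one explicit geodesic out of a pair of adapted complete flags $A_{\bullet}$, $B_{\bullet}$ and check that its very first edge fails ortho-adjacency, using the standard fact that two subspaces sharing a common hyperplane $T$ are compatible iff the orthogonal complements of $T$ inside each are mutually orthogonal (the same fact the paper uses implicitly in Lemmas \ref{lemma-ad-1} and \ref{lemma-ad-2}). Your route is more economical in that it needs only one witnessing geodesic rather than control over the whole family of subspaces $P+N$, it avoids the paper's case split between $X\cap Y=0$, the adjacent case, and $0<\dim(X\cap Y)<k-1$ (your flag construction works uniformly for all $d\ge 1$), and it makes the failure of ortho-adjacency completely explicit via the computation $\langle x_0,\,y_0-P_{A_1}y_0\rangle=\langle x_0,y_0\rangle\ne 0$; what the paper's version buys in exchange is a self-contained identification of $Y$ with the orthogonal complement $X'$ of $X$ in $X+Y$, which is conceptually closer to the compatibility conclusion. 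Both arguments are complete and correct.
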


\begin{proof}
By  \cite[Lemma 4.31]{Pankov-book}, we have to prove the following:
if every geodesic of $\Gamma_k(H)$ connecting $X$ and $Y$ is a geodesic in 
$\Gamma^{\perp}_k(H)$, then $X,Y$ are compatible.

We start from the case when $X\cap Y=0$, i.e.
the distance between $X$ and $Y$ in $\Gamma_k(H)$ is equal to $k$ (the property (D1)).
We need to show that $X$ and $Y$ are orthogonal.
The subspace $X+Y$ is $2k$-dimensional and we denote by $X'$ the $k$-dimensional subspace
which is the orthogonal complement of $X$ in $X+Y$.
Let us take any $1$-dimensional subspace $P\subset Y$ and a $(k-1)$-dimensional subspace $N\subset X$.
The $k$-dimensional subspace $P+N$ is adjacent to $X$ and the distance between 
$P+N$ and $Y$ in $\Gamma_k(H)$ is equal to $k-1$ (the property (D1)).
By the property (D3),
there is a geodesic of $\Gamma_k(H)$ which connects $X$ with $Y$ and contains $P+N$.
By our assumption, this is a geodesic in $\Gamma^{\perp}_k(H)$
and $P+N$ is ortho-adjacent to $X$.
Therefore, $P+N$ contains a $1$-dimensional subspace $Q$ orthogonal to $X$,
in other words, $P+N$ intersects $X'$ precisely in $Q$. 
Suppose that $Q$ is distinct from $P$.
We choose a $(k-1)$-dimensional subspace $M\subset X$
such that $P+M$ does not contain $Q$
(such subspace exists, since the intersection of all $P+N$, where $N$ is a $(k-1)$-dimensional subspace of $X$,
coincides with $P$ and $P\ne Q$ by assumption). 
Then $P+M$ intersects $X'$ in a $1$-dimensional subspace $Q'$ distinct from $Q$.
The $2$-dimensional subspace $Q'+Q$ is contained in the intersection of $X'$ and $P+X$.
Since $X$ is a hyperplane of $P+X$, the subspace $Q'+Q$ has a non-zero intersection with $X$
which is impossible. This contradiction means that $P=Q$.
So, we have established that every $1$-dimensional subspace $P\subset Y$ is contained in $X'$.
Therefore, $Y=X'$, and consequently, $X$ is orthogonal to $Y$.

Suppose that $\dim(X\cap Y)=m$ and $0<m<k-1$
(the case when $X$ and $Y$ are adjacent is trivial). 
Then $$\dim (X+Y)=2k-m$$
and the distance between $X$ and $Y$ in $\Gamma_{k}(H)$ is equal to $k-m$ (the property (D1)).
Denote by $V$ the orthogonal complement of $X\cap Y$ in $X+Y$.
The dimension of this subspace is equal to $2(k-m)$.
The intersection of the $(k-m)$-dimensional subspaces 
$$X'=X\cap V\;\mbox{ and }\;Y'=Y\cap V$$ 
is zero and the distance between them in $\Gamma_{k-m}(H)$ is equal to $k-m$.
If 
\begin{equation}\label{eq-geo1}
X'=Z'_{0},Z'_{1},\dots, Z'_{k-m}=Y'
\end{equation}
is a geodesic in $\Gamma_{k-m}(H)$,
then each $Z'_i$ is contained in $V$ (the property (D2)) and
\begin{equation}\label{eq-geo2}
X=Z_{0}, Z_{1},\dots,Z_{k-m}=Y\;\mbox{ with }\;Z_{i}=Z'_i +(X\cap Y), i\in \{0,1,\dots,k-m\}
\end{equation}
is a geodesic in $\Gamma_{k}(H)$. 
By our assumption, \eqref{eq-geo2} is a geodesic in $\Gamma^{\perp}_{k}(H)$
which implies that $Z_{i-1}$ and $Z_i$ are ortho-adjacent for every $i\in \{1,\dots,k-m\}$.
The latter guarantees that $Z'_{i-1}$ and $Z'_i$ are ortho-adjacent for every $i\in \{1,\dots,k-m\}$, i.e.
\eqref{eq-geo1} is a geodesic in $\Gamma^{\perp}_{k-m}(H)$.
So, every geodesic of $\Gamma_{k-m}(H)$ connecting $X'$ and $Y'$
is a geodesic in $\Gamma^{\perp}_{k-m}(H)$.
Applying arguments from the previous paragraph, we establish that $X'$ and $Y'$ are orthogonal. 
This means that $X$ and $Y$ are compatible.
\end{proof}

As an application of Theorem \ref{theorem-comp}, we prove the following statement 
which will be exploited to prove Theorem \ref{theorem-ortho-gr}.

\begin{lemma}\label{lemma-aut}
Suppose that $4\le 2k\le \dim H$ and 
$f$ is a bijective transformation of ${\mathcal G}_{k}(H)$ which is an automorphism of both 
$\Gamma_{k}(H)$ and $\Gamma^{\perp}_{k}(H)$.
Then $f$ is induced by a unitary or anti-unitary operator or it is the composition of 
a bijection induced by a unitary or anti-unitary operator and the orthocomplementary map.
The second possibility is realised only in the case when $\dim H=2k$.
\end{lemma}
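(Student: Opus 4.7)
The plan is to apply Chow's theorem to deduce that $f$ is (up to orthocomplementation in the $\dim H=2k$ case) induced by a semi-linear automorphism $L$ of $H$, then leverage Theorem~\ref{theorem-comp} together with the hypothesis that $f$ is also an automorphism of $\Gamma^\perp_k(H)$ to show $L$ preserves compatibility and hence orthogonality of $k$-dimensional subspaces, descend to orthogonality on $\mathcal{G}_1(H)$ using $2k\le\dim H$, and conclude via Uhlhorn's version of Wigner's theorem that $L$ is a scalar multiple of a unitary or anti-unitary operator.

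The condition $4\le 2k\le\dim H$ yields $1<k<\dim H-1$, so Theorem~\ref{theorem-chow} applies and gives either (i) $f$ is induced by a semi-linear automorphism $L\colon H\to H$, or (ii) $\dim H=2k$ and $f$ is induced by a semi-linear isomorphism $H\to H^*$. In case (ii), the orthocomplementary map (call it $\perp$) is an automorphism of both $\Gamma_k(H)$ and $\Gamma^\perp_k(H)$ and it interchanges stars and tops of $\Gamma_k(H)$ by a direct dimension count, while $f$ in case (ii) sends stars to tops (from $f(X)=L(X)^0$); hence $\perp\circ f$ sends stars to stars and so, by Chow, falls into case (i). Replacing $f$ by $\perp\circ f$ when necessary, which accounts for the orthocomplementary alternative in the conclusion, I may assume $f$ is induced by a semi-linear automorphism $L$ of $H$.

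Next, I would show $f$ preserves compatibility on $\mathcal{G}_k(H)$ in both directions. Given compatible $X,Y$ and any geodesic $\gamma'$ of $\Gamma_k(H)$ from $f(X)$ to $f(Y)$, the preimage $f^{-1}(\gamma')$ is a geodesic of $\Gamma_k(H)$ from $X$ to $Y$, hence by Theorem~\ref{theorem-comp} also a geodesic of $\Gamma^\perp_k(H)$; pushing forward by the $\Gamma^\perp_k$-automorphism $f$ shows $\gamma'$ too is such a geodesic, and Theorem~\ref{theorem-comp} in reverse gives compatibility of $f(X),f(Y)$. The same argument with $f^{-1}$ yields the converse. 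Since $L$ is a bijection it preserves trivial intersection, and orthogonality of $k$-dim subspaces is precisely compatibility plus trivial intersection, so $L$ preserves orthogonality on $\mathcal{G}_k(H)$ in both directions.

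Finally, I would descend to orthogonality on $\mathcal{G}_1(H)$: given orthogonal $1$-dim $P,Q$, use $2k\le\dim H$ to find orthogonal $k$-dim $X\supset P$ and $Y\supset Q$ (first pick any $k$-dim $X$ with $P\subset X\subset Q^\perp$, then any $k$-dim $Y$ with $Q\subset Y\subset X^\perp$, both possible by dimension). Then $L(X)\perp L(Y)$, so $L(P)\perp L(Q)$, and the same argument for $L^{-1}$ gives the opposite direction. By Uhlhorn's theorem~\cite{Uhlhorn}, the bijection of $\mathcal{G}_1(H)$ induced by $L$ comes from a unitary or anti-unitary operator $U$; a short projective-geometry rigidity argument---writing $L(x)=\lambda(x)U(x)$ and using $L(x+y)=L(x)+L(y)$ on linearly independent $x,y$ to force $\lambda$ to be a global constant---yields $L=\alpha U$ for some $\alpha\in\mathbb{C}^*$, so $f$ is induced by $U$. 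The main obstacle I anticipate is the Chow reduction: carefully verifying that $\perp$ interchanges stars and tops when $\dim H=2k$ and that case (ii) of Chow genuinely sends stars to tops, so that $\perp\circ f$ falls under the star-preserving form of Chow. A secondary subtlety is the final rigidity step, where the field automorphism of $L$ must be matched against the linear or anti-linear character of $U$.
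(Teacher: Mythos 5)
Your proposal is correct and follows essentially the same route as the paper: reduce via Chow's theorem (composing with the orthocomplementary map in the $\dim H=2k$ case) to a semi-linear automorphism $L$, use Theorem~\ref{theorem-comp} and the fact that $f$ is an automorphism of both graphs to get compatibility preservation in both directions, characterise orthogonality as compatibility plus trivial intersection, and conclude that $L$ is a scalar multiple of a unitary or anti-unitary operator. The only cosmetic difference is the final step, where the paper cites \cite[Proposition 4.2]{Pankov-book} for the fact that a semi-linear automorphism preserving orthogonality of vectors is a scalar multiple of a unitary or anti-unitary operator, while you re-derive this via Uhlhorn's theorem plus the standard scalar-rigidity argument.
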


\begin{proof}
If $f$ sends stars to stars and tops to tops, then $f$ is induced by a semi-linear automorphism of $H$;
otherwise, this property holds for the composition of $f$ and the orthocomplementary map
(since the orthocomplementation transfers stars to tops and tops to stars).
Therefore, there is a semilinear automorphism $U$ of $H$ such that 
one of the following possibilities is realised:
\begin{enumerate}
\item[$\bullet$] $f(X)=U(X)$ for all $X\in {\mathcal G}_{k}(H)$,
\item[$\bullet$] $\dim H=2k$ and $f(X)=U(X)^{\perp}$ for all $X\in {\mathcal G}_{k}(H)$.
\end{enumerate}
We need to show that $U$ is a scalar multiple of a unitary or anti-unitary operator
(since $U$ and $aU$ induce the same transformation of ${\mathcal G}_{k}(H)$ for every non-zero scalar $a$).
It is sufficient to consider the first case 
(in the second case, we replace $f$ by the composition of $f$ and the orthocomplementary). 
Since $f$ is an automorphism of both $\Gamma^{\perp}_{k}(H)$ and $\Gamma_{k}(H)$,
Theorem \ref{theorem-comp} guarantees that $f$ is compatibility preserving in both directions. 
Two $k$-dimensional subspaces of $H$ are orthogonal if and only if they are compatible 
and the distance between them in $\Gamma_{k}(H)$ is equal to $k$.
This means that $f$ is orthogonality preserving in both directions and
$U$ sends orthogonal vectors to orthogonal vectors
which implies that $U$ is a scalar multiple of a unitary or anti-unitary operator
\cite[Proposition 4.2]{Pankov-book}.
\end{proof}

\subsection{Remarks}
It was noted above that it is sufficient to prove Theorem \ref{theorem-ortho-gr} only 
for the case when $4\le 2k\le \dim H$. 
If $\dim H\ne 2k$, then Theorem \ref{theorem-ortho-gr} is a direct consequence of
Lemma \ref{lemma-aut} and the following statement.

\begin{lemma}[Lemma 4.37 in \cite{Pankov-book}]\label{lemma-old}
If $2<2k< \dim H$, then every ortho-adjacency preserving injective transformation of ${\mathcal G}_{k}(H)$
is adjacency preserving.
\end{lemma}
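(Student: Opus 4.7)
The plan is to characterize adjacency in $\mathcal{G}_k(H)$ using only the ortho-adjacency relation, so that an injective ortho-adjacency preserver $f$ automatically preserves adjacency. If $X,Y$ are already ortho-adjacent the claim is immediate, so the essential case is $X,Y$ adjacent but not compatible. Here Lemma~\ref{lemma-ad-1} supplies, for each $1$-dimensional $P\subset(X+Y)^{\perp}$, a common ortho-neighbor $Z_P=(X\cap Y)+P$ of $X$ and $Y$. The hypothesis $2k<\dim H$ gives $\dim(X+Y)^{\perp}\ge k\ge 2$, so we may pick $m$ mutually orthogonal lines $P_1,\dots,P_m$ in $(X+Y)^{\perp}$; the corresponding subspaces $Z_{P_1},\dots,Z_{P_m}$ are then pairwise ortho-adjacent and each ortho-adjacent to both $X$ and $Y$. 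Thus $\{X,Y,Z_{P_1},\dots,Z_{P_m}\}$ is a configuration in which every pair except possibly $(X,Y)$ is ortho-adjacent; injectivity plus ortho-adjacency preservation carries it to $\{f(X),f(Y),f(Z_{P_1}),\dots,f(Z_{P_m})\}$ with the same ortho-adjacency pattern.

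Next I would invoke the spectral theorem on the pairwise commuting rank-$k$ projections $f(Z_{P_i})$: they are simultaneously diagonalized by an orthonormal basis of $H$, and combinatorially their $k$-subsets form a clique in the Johnson graph, either a \emph{star} containing a common $(k-1)$-element set or a \emph{top} contained in a common $(k+1)$-element set. Since $f(X)$ and $f(Y)$ each commute with every $f(Z_{P_i})$, both decompose as direct sums over the joint eigenspaces of the family. Imposing $\dim(f(X)\cap f(Z_{P_i}))=k-1$ (and the analogue for $f(Y)$) for each $i$ constrains these decompositions; a short case analysis shows that, provided $m\ge 3$, the only admissible forms are either $f(X)=K+P_X'$, $f(Y)=K+P_Y'$ with a common $(k-1)$-dimensional subspace $K$ and distinct lines $P_X',P_Y'$ (in the star case), or the analogous top-type configuration. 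In either case $\dim(f(X)\cap f(Y))=k-1$, so $f(X)$ and $f(Y)$ are adjacent.

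The main obstacle is arranging $m\ge 3$. For $k\ge 3$ this is automatic from $\dim(X+Y)^{\perp}\ge k\ge 3$; but for $k=2$ and $\dim H=5$ one has $\dim(X+Y)^{\perp}=2$, so only $m\le 2$ is available, and the case analysis above then admits ``mixed'' configurations in which $f(X)$ and $f(Y)$ would satisfy $f(X)\cap f(Y)=0$. To exclude these I would exploit the full $\mathbb{P}^{1}$-family of type-1 common ortho-neighbors $\{Z_P:P\subset(X+Y)^{\perp}\}$ (each of which must commute with both $f(X)$ and $f(Y)$ by ortho-adjacency preservation), together with the unique type-2 common ortho-neighbor from Lemma~\ref{lemma-ad-2}, to enforce enough rigidity to rule out the mixed configurations. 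The careful treatment of this low-dimensional exception is the principal technical difficulty in the argument.
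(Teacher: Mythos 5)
Your route is genuinely different from the paper's: Lemma \ref{lemma-old} is not proved here but imported from \cite[Lemma 4.37]{Pankov-book} (and from \cite{H} for $k=2$), where the argument runs through maximal cliques --- when $2k<\dim H$, ortho-stars and ortho-tops have different cardinalities, and one shows that an ortho-adjacency preserving injection maps ortho-stars into ortho-stars. Your local configuration argument is closer in spirit to Section 4 of the paper, but correctly adapted to the one-directional setting, and its main line is sound: the $Z_{P_i}=(X\cap Y)+P_i$ with $P_1,\dots,P_m$ mutually orthogonal lines in $(X+Y)^{\perp}$ are pairwise ortho-adjacent; for $m\ge 3$ the clique $\{f(Z_{P_i})\}$ lies in a star or in a top but not both (three pairwise compatible hyperplanes of a $(k+1)$-dimensional space all containing a common $(k-1)$-dimensional $K$ would force three mutually orthogonal lines in the $2$-dimensional space $U\cap K^{\perp}$); and the (T1)/(T2) dichotomy applied to the triples $f(X),f(Z_{P_i}),f(Z_{P_j})$ then forces $f(X)$ and $f(Y)$ either both to contain the common $(k-1)$-space or both to lie in the common $(k+1)$-space, hence to be adjacent. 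This settles $k\ge 3$ and $k=2$, $\dim H\ge 6$.

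The gap is exactly where you locate it, and your sketch does not close it. For $k=2$, $\dim H=5$ only $m=2$ is available, and with $K=f(Z_{P_1})\cap f(Z_{P_2})$ and $U=f(Z_{P_1})+f(Z_{P_2})$ the mixed configuration $f(X)=K+R$, $R\not\subset U$, and $f(Y)\subset U$, $K\not\subset f(Y)$, gives $f(X)\cap f(Y)=0$. The tools you name do not obviously exclude it: since $f$ preserves ortho-adjacency in one direction only, you cannot conclude that $f(S)$ (with $S$ the type-2 neighbour from Lemma \ref{lemma-ad-2}) fails to be ortho-adjacent to the $f(Z_P)$, nor transfer exact counts of common ortho-neighbours from domain to image --- injectivity only bounds such counts from below, which is the wrong direction here. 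Moreover, applying Lemma \ref{lemma2-2} to the ortho-adjacent pair $f(Z_{P_1}),f(Z_{P_2})$ in the mixed case merely shows that $f(X),f(Y)$ would be compatible, hence orthogonal planes in a $5$-space, and such a pair admits a $\mathbb{P}^{1}\times\mathbb{P}^{1}$ family of common ortho-neighbours, so no contradiction arises from accommodating the $\mathbb{P}^{1}$-family $\{f(Z_P)\}$. As it stands this case needs either a genuinely new argument or the clique-based proof of \cite{H} and \cite[Lemma 4.37]{Pankov-book}.
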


\begin{rem}{\rm
For $k=2$ Lemma \ref{lemma-old} is proved in \cite{H}.
}\end{rem}

In \cite{PZ}, an analogue of Theorem \ref{theorem-ortho-gr} is proved 
for the set of all non-isotropic $k$-dimensional subspaces of a sesquilinear form
under the assumption that the dimension of the associated vector space is distinct from $2k$.
Methods used to prove Lemma \ref{lemma-old} as well as methods exploited in \cite{PZ}
are based on some properties of maximal cliques in ortho-Grassmann graphs.

Every maximal clique of $\Gamma^{\perp}_{k}(H)$ is the intersection of 
a star or a top (a maximal clique of  $\Gamma_{k}(H)$) with the set of all $k$-dimensional subspaces spanned by subsets of an orthonormal basis of $H$
such that the $(k-1)$-dimensional or $(k+1)$-dimensional subspace associated with the star or the top
(respectively) is also spanned by a subset of this basis \cite[Section 4.6]{Pankov-book}.
Such a clique will be called an {\it ortho-star} or an {\it ortho-top}, respectively. 
Every ortho-top contains precisely $k+1$ elements; 
an ortho-star  contains precisely $n-k+1$ elements if $\dim H=n$ is finite, and it is infinite if 
$H$ is infinite-dimensional \cite[Lemma 4.30]{Pankov-book}. 
In this  way ortho-stars can be easily distinguished from ortho-tops, but not in the case when $\dim H = 2k$ and they have the same number of elements.

If $2<2k< \dim H$, 
then every ortho-adjacency preserving injective transformation of ${\mathcal G}_{k}(H)$
sends ortho-stars to subsets of ortho-stars. This observation is a crucial tool in the proof of Lemma \ref{lemma-old}.
Similarly, transformations considered in \cite{PZ} send ortho-stars to ortho-stars and ortho-tops to ortho-tops.

It was conjectured in \cite{PZ} that the method used to prove Chow's theorem 
can be modified for the case when $\dim H=2k$.
The key argument is the following:
if the intersection of a star and a top is non-empty, then the number of elements in this intersection
is greater than the number of elements in the intersection of two distinct stars or 
the intersection of two distinct tops. 
Consequently,  if one star goes to a top under an automorphism of the Grassmann graph, then all stars go to tops and all tops go to stars.

Now, we explain why the same arguments do not work for ortho-stars and ortho-tops
in the case when $\dim H=2k$.
The intersection of two distinct maximal cliques of $\Gamma^\perp_k(H)$ depends 
on the relations between the associated $(k-1)$-dimensional or $(k+1)$-dimensional subspaces 
and orthonormal bases. A direct verification shows that the following assertions are fulfilled:
\begin{enumerate}
\item[$\bullet$] The number of elements in the intersection of two distinct ortho-stars
associated to the same $(k-1)$-dimensional subspace can take any value $m\in\{0,1,\dots, k-1\}$
and the same holds for the intersection of two distinct ortho-tops corresponding to the same
$(k+1)$-dimensional subspace.
\item[$\bullet$] The intersection of two ortho-stars associated to distinct $(k-1)$-dimensional subspaces
is empty or consists of one element and the same holds  for 
the intersection of two ortho-tops corresponding to  distinct $(k+1)$-dimensional subspaces.
\item[$\bullet$] The number of elements in the intersection of an ortho-star and an ortho-top 
can take any value $m\in\{0,1,2\}$.
\end{enumerate}
Consider ortho-stars ${\mathcal S}_1,{\mathcal S}_2$ associated to distinct $(k-1)$-dimensional subspaces
and such that ${\mathcal S}_1\cap{\mathcal S}_2$ is one element.
If $\mathcal S_1$ goes to an ortho-top $\mathcal T$ under an automorphism of 
$\Gamma^\perp_k(H)$, then this automorphism sends $\mathcal S_2$ to an ortho-star
or to an ortho-top intersecting $\mathcal T$ precisely in one element. Therefore,
to claim that an automorphism of $\Gamma^\perp_k(H)$ either preserves or exchanges
two types of maximal cliques, we need different arguments.

Our proof of  Theorem \ref{theorem-ortho-gr} is based on 
a characterisation of adjacency  in terms of ortho-adjacency and 
do not depend on the relation between $\dim H$ and $k$.

\section{A characterization of adjacency in terms of ortho-adjacency and proof of Theorem \ref{theorem-ortho-gr}}
In this section, we investigate geodesics of length $2$ in the ortho-Grassmann graph $\Gamma^{\perp}_{k}(H)$.
If $\dim H=n$ is finite, then the orthocomplementary map is an isomorphism between
$\Gamma^{\perp}_{k}(H)$ and $\Gamma^{\perp}_{n-k}(H)$. 
So, we assume that $4\le 2k\le \dim H$ without loss of generality.

Let $X$ and $Y$ be distinct $k$-dimensional subspaces of $H$ at distance $2$ in $\Gamma^{\perp}_{k}(H)$.
Then $X,Y$ are adjacent and non-compatible (this case was considered in Subsection 3.1)
or the distance between $X$ and $Y$ in $\Gamma_{k}(H)$ is equal to $2$ and $X\cap Y$
is $(k-2)$-dimensional.
The following example concerns the second case. 

\begin{exmp}{\rm
Suppose that $X,Y$ are compatible $k$-dimensional subspaces of $H$ whose intersection is $(k-2)$-dimensional
and consider an orthonormal basis of $H$ such that $X$ and $Y$ are spanned by subsets of this basis.
There is a $k$-dimensional subspace spanned by  a subset of this basis and ortho-adjacent to both $X,Y$.
}\end{exmp}

In the case when $X\cap Y$ is $(k-2)$-dimensional,
the existence of a $k$-dimensional subspace ortho-adjacent to both $X,Y$
does not imply that $X$ and $Y$ are compatible. 
We give an example for $k=2$, but the general case is similar.

\begin{exmp}{\rm
Let us  take any $2$-dimensional subspace $Z\subset H$. 
Consider a $1$-dimensional subspace $P\subset Z$ and 
the $1$-dimensional subspace $Q=Z\cap P^{\perp}$ which is 
the unique $1$-dimensional subspace of $Z$ orthogonal to $P$. 
Let $X$ be a $2$-dimensional subspace containing $Q$ and orthogonal to $P$
(since $\dim H\ge 4$, such a subspace is not unique).
Similarly, we take a $1$-dimensional subspace $P'\subset Z$ distinct from $P,Q$,
the  $1$-dimensional subspace $Q'=Z\cap P'^{\perp}$ (which is also distinct from $P,Q$)
and any $2$-dimensional subspace $Y$ containing $Q'$ and orthogonal to $P'$.
We can choose $X,Y$ such that $X\cap Y=0$ and $X,Y$ are non-orthogonal.
Then $X,Y$ are non-compatible; on the other hand,
each of $X,Y$ is ortho-adjacent to $Z$.
}\end{exmp}

\begin{lemma}\label{lemma2-1}
Let $X$ and $Y$ be compatible $k$-dimensional subspaces of $H$ whose intersection is $(k-2)$-dimensional.
Then for every $k$-dimensional subspace $Z\subset H$ ortho-adjacent to both $X,Y$
there are precisely two $k$-dimensional subspaces of $H$ ortho-adjacent to each of $X,Y,Z$.
\end{lemma}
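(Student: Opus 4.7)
The plan is to reduce the counting problem to a calculation inside a $4$-dimensional subspace by exploiting property (D2). Let $W := X \cap Y$; then $\dim W = k - 2$. Since $X$ and $Y$ are compatible, there is an orthonormal basis of $H$ in which both are spanned by subsets, and this yields an orthogonal decomposition $X + Y = W \oplus X_0 \oplus Y_0$, where $X = W \oplus X_0$, $Y = W \oplus Y_0$, and $X_0, Y_0$ are mutually orthogonal $2$-dimensional subspaces. Write $U := X_0 \oplus Y_0$ for the $4$-dimensional orthogonal complement of $W$ in $X + Y$.

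Any $k$-dimensional subspace $T$ ortho-adjacent to both $X$ and $Y$ produces a path $X, T, Y$ of length $2$ in $\Gamma_k(H)$, and since $d(X, Y) = 2$ in $\Gamma_k(H)$ this is a geodesic, so by property (D2) one has $W \subseteq T \subseteq X + Y$, i.e.\ $T = W + T_0$ with $T_0 := T \cap U$ a plane. Adjacency of $T$ with $X$ and with $Y$ forces $T_0 \cap X_0$ and $T_0 \cap Y_0$ to each be a line, say $P$ and $Q$; since $X_0 \perp Y_0$ one has $T_0 = P \oplus Q$ as an orthogonal sum. Because the projection onto $W$ commutes with every projection onto a subspace of $U$, compatibility of $T$ with $X$ reduces to compatibility of $T_0$ with $X_0$ inside $U$, and the latter follows from $X_0 \perp Y_0$; the same holds for $Y$. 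Thus the subspaces ortho-adjacent to both $X$ and $Y$ are precisely the subspaces $W + P + Q$ with $P \subseteq X_0$, $Q \subseteq Y_0$ lines, and every such subspace qualifies.

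Writing the given $Z$ in this form as $W + P + Q$ and a candidate as $Z' = W + P' + Q'$, the intersection is $Z \cap Z' = W + (P + Q) \cap (P' + Q')$. Using $X_0 \cap Y_0 = 0$, a short dimension count shows that the second summand is a line exactly when precisely one of $P = P'$ or $Q = Q'$ holds; otherwise either $Z' = Z$ (both equal) or $Z_0 + Z'_0 = U$ and the intersection is zero. In each of the two surviving cases, the same projection argument reduces compatibility of $Z$ and $Z'$ to compatibility of their planes in $U$, and the orthogonal decompositions $Z_0 = P \oplus Q$, $Z'_0 = P' \oplus Q'$ make this equivalent to orthogonality of the two unshared lines. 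Hence $P' = P$ forces $Q'$ to be the orthogonal complement of $Q$ in $Y_0$, while $Q' = Q$ forces $P'$ to be the orthogonal complement of $P$ in $X_0$; these are distinct from each other and from $Z$, giving exactly two subspaces. The crux is the projection-based reduction of compatibility to statements inside $U$, which turns the problem into plane geometry once the orthogonal decomposition of $X + Y$ is in place.
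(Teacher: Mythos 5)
Your proof is correct and follows essentially the same route as the paper's: both reduce the problem to the $4$-dimensional orthogonal complement of $X\cap Y$ in $X+Y$ and identify the two solutions as the subspaces obtained by replacing exactly one of the lines $P\subset X_0$, $Q\subset Y_0$ by its orthogonal complement in $X_0$ resp.\ $Y_0$. The only cosmetic difference is that you verify the compatibility constraints by direct computation with sums of projections (using that two distinct lines are compatible iff orthogonal), whereas the paper invokes a common orthonormal basis for the mutually compatible subspaces $X,Y,Z,Z'$.
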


\begin{proof}
Since $X,Y,Z$ are mutually compatible, there is an orthonormal basis $B$ of $H$
such that each of these subspaces is spanned by a subset of $B$.
Observe that the $(k-2)$-dimensional subspace $X\cap Y$ and the $2$-dimensional subspaces 
$$X'=X\cap (X\cap Y)^{\perp}\;\mbox{ and }\; Y'=Y\cap (X\cap Y)^{\perp}$$
are also spanned by subsets of $B$.
Since $X,Z,Y$ is a geodesic in $\Gamma^{\perp}_{k}(H)$,
the subspace $Z$ contains $X\cap Y$ and intersects $X'$ and $Y'$ in $1$-dimensional subspaces $P$ and $Q$,
respectively.
Denote by $P'$ and $Q'$ the $1$-dimensional subspaces 
which are the  orthogonal complements of $P$ and $Q$ in $X'$ and $Y'$, respectively.
Each of the $1$-dimensional subspaces $P,P',Q,Q'$ contains a vector from $B$.
The $k$-dimensional subspaces
$$Z_1= P' +(X\cap Y)+Q\;\mbox{ and }\; Z_2=P+(X\cap Y)+Q'$$
are  ortho-adjacent to $X,Y,Z$ and spanned by subsets of $B$.

Suppose that $Z'$ is a $k$-dimensional subspace ortho-adjacent to $X,Y,Z$.
Then $X,Y,Z,Z'$ are mutually compatible and there is an orthonormal basis
$B'$ such that each of these subspaces is spanned by a subset of $B'$.
The subspaces $X\cap Y, X',Y'$ are  also spanned by vectors from $B'$
and each of the $1$-dimensional subspaces $P,P',Q,Q'$ contains a vector from $B'$.
As above, we establish that $Z'$ contains $X\cap Y$,
the subspaces $X'\cap Z'$, $Y'\cap Z'$ are $1$-dimensional and 
$$Z'= (X'\cap Z')+(X\cap Y)+(Y'\cap Z').$$
Since  $X'\cap Z'$ is a $1$-dimensional subspace of $X'$ containing a vector from $B'$,
it coincides with $P$ or $P'$.
Similarly, $Y'\cap Z'$ coincides with $Q$ or $Q'$.
Since $Z$ and $Z'$ are ortho-adjacent, we obtain that $Z'$ is $Z_1$ or $Z_2$.
\end{proof}

\begin{lemma}\label{lemma2-2}
Suppose that $X$ and $Y$ are $k$-dimensional subspaces of $H$ whose intersection is $(k-2)$-dimensional
and there are two ortho-adjacent $k$-dimensional subspaces $Z,Z'\subset H$ such that each of these subspaces 
is ortho-adjacent to both $X,Y$. Then $X$ and $Y$ are compatible.
\end{lemma}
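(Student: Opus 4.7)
The plan is to reduce the problem to a configuration inside a 4-dimensional Hilbert subspace $W$ and then to carry out a finite case analysis there. Set $S := X \cap Y$, which has dimension $k-2$, and $W := (X+Y) \cap S^{\perp}$, which has dimension $4$.

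The first step is to show $S \subseteq Z \cap Z'$ and $Z, Z' \subseteq X+Y$. Since $Z$ is ortho-adjacent to both $X$ and $Y$, the subspaces $Z \cap X$ and $Z \cap Y$ are $(k-1)$-dimensional hyperplanes of $Z$; they cannot coincide, else their common value would be a $(k-1)$-dimensional subspace of $X \cap Y = S$, contradicting $\dim S = k-2$. Counting dimensions inside the $k$-dimensional $Z$ gives $\dim(Z \cap X \cap Y) \geq k-2$, and since this intersection is contained in $S$, equality holds and $S \subseteq Z$. Summing the two distinct hyperplanes of $Z$ now yields $Z \subseteq X+Y$, and the same holds for $Z'$.

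Writing $X = S \oplus X_1$, $Y = S \oplus Y_1$, $Z = S \oplus Z_1$, $Z' = S \oplus Z'_1$ with $W$-parts $X_1, Y_1, Z_1, Z'_1 \subseteq W$ each 2-dimensional, the projection onto $S$ commutes with the projections onto the $W$-parts, so compatibility of any two of $X, Y, Z, Z'$ is equivalent to compatibility of the corresponding $W$-parts; dimensions of intersections likewise drop by $\dim S$. Thus in $W$ we have $X_1 \cap Y_1 = 0$, and each of the five pairs other than $(X_1, Y_1)$ is ortho-adjacent. Compatibility of $X$ and $Y$ is equivalent to compatibility of $X_1$ and $Y_1$, and since $X_1 + Y_1 = W$, this is in turn equivalent to $X_1 \perp Y_1$, so it suffices to prove this orthogonality.

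The 4-dimensional analysis then finishes the argument. Let $M := Z_1 \cap Z'_1$, and let $N \subset Z_1$, $N' \subset Z'_1$ be the 1-dimensional orthogonal complements of $M$ in $Z_1$ and $Z'_1$. Ortho-adjacency of $Z_1$ and $Z'_1$ gives $N \perp N'$, so choosing unit vectors $m \in M$, $n \in N$, $n' \in N'$ and a unit $t$ orthogonal to all three produces an ONB $\{m, n, n', t\}$ of $W$. This is, up to phases, the only ONB of $W$ whose subsets span both $Z_1$ and $Z'_1$. Since the triple $X_1, Z_1, Z'_1$ is mutually compatible, their projections commute and share an eigenbasis; hence $X_1$ is spanned by a 2-element subset of $\{m, n, n', t\}$. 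The constraints $\dim(X_1 \cap Z_1) = \dim(X_1 \cap Z'_1) = 1$ leave only $X_1 \in \{\langle m, t \rangle, \langle n, n' \rangle\}$, and symmetrically $Y_1 \in \{\langle m, t \rangle, \langle n, n' \rangle\}$. The condition $X_1 \cap Y_1 = 0$ excludes $X_1 = Y_1$, so $\{X_1, Y_1\} = \{\langle m, t \rangle, \langle n, n' \rangle\}$; these two subspaces are orthogonal complements in $W$, yielding $X_1 \perp Y_1$. I expect the main delicate point to be the containment $S \subseteq Z$ (and the observation that it is the joint ortho-adjacency of $Z$ and $Z'$ that pins down the ONB $\{m,n,n',t\}$ and makes the case analysis rigid); once these are in hand, the reduction to $W$ and the 4-dimensional analysis proceed cleanly.
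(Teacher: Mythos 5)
Your proof is correct. It shares with the paper's proof the key reduction to the $4$-dimensional orthogonal complement $W$ of $X\cap Y$ in $X+Y$ (the paper calls it $M$ and works with $X'=X\cap M$, $Y'=Y\cap M$, $S=Z\cap M$, $S'=Z'\cap M$), but the way you finish inside $W$ is genuinely different. The paper first proves, by a dimension count, that $Z\cap Z'$ must lie in $X$ or in $Y$, and then carries out an explicit orthogonality chase: it shows $S,S'$ are compatible with $X',Y'$ by repeatedly invoking the closure properties of compatibility under intersections, sums and orthocomplements, and then argues with several $1$-dimensional subspaces ($P=S\cap S'$, its complement $Q$ in $X'$, the traces $P_1,P_2$ of $S,S'$ on $Y'$ and their complements $Q_1,Q_2$) to conclude $X'\perp Y'$. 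You instead use simultaneous diagonalisation: the ortho-adjacent pair $Z_1,Z'_1$ rigidly determines (up to phases) the unique orthonormal basis $\{m,n,n',t\}$ of $W$ adapted to both, so any $2$-dimensional subspace compatible with both and meeting each in a line must be $\langle m,t\rangle$ or $\langle n,n'\rangle$; this pins down $X_1$ and $Y_1$ and yields $X_1\perp Y_1$ at once. Your route is shorter and more conceptual, it avoids the paper's preliminary step about $Z\cap Z'$ (which falls out as a byproduct, since exactly one of $\langle m,t\rangle,\langle n,n'\rangle$ contains $Z_1\cap Z'_1$), and it replaces the citation of the compatibility-closure lemma by the elementary identity $P_X=P_S+P_{X_1}$. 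The paper's more hands-on computation, on the other hand, stays entirely within the language of orthogonality of lines and does not appeal to the spectral fact that commuting projections share an eigenbasis.
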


\begin{proof}
First, we show that the $(k-1)$-dimensional subspace $Z\cap Z'$
is contained in $X$ or $Y$.
If $Z\cap Z'$ is not contained in $X$, then $Z$ and $Z'$ intersect $X$ in distinct 
$(k-1)$-dimensional subspaces whose sum coincides with $X$.
Similarly, $Z\cap Z' \not\subset Y$ implies that  $Z$ and $Z'$ intersect $Y$ in distinct 
$(k-1)$-dimensional subspaces whose sum is $Y$.
Then the $(k+1)$-dimensional subspace $Z+Z'$ contains both $X,Y$
and $\dim(X+Y)<k+2$
which contradicts the assumption that $X\cap Y$ is $(k-2)$-dimen\-sional.

Without loss of generality, we can assume that $Z\cap Z'$ is a $(k-1)$-dimensional subspace of $X$.
We have
$$\dim (X\cap Y)=k-2\;\mbox{ and }\;\dim(X+Y)=k+2,$$ 
which means that the orthogonal complement of $X\cap Y$ in $X+Y$
is $4$-dimensional. We denote this subspace by $M$. 
The subspaces
$$X'=X\cap M\;\mbox{ and }\; Y'=Y\cap M$$
are $2$-dimensional. 
Since $X,Z,Y$ and $X,Z',Y$ are geodesics in $\Gamma^{\perp}_{k}(H)$, 
each of $Z,Z'$ contains $X\cap Y$ and is contained in $X+Y$
which implies that
$$S=Z\cap M\;\mbox{ and }\; S'=Z'\cap M$$
are distinct $2$-dimensional subspaces.
Recall that $Z\cap Z'$ is a $(k-1)$-dimensional subspace of $X$ containing $X\cap Y$,
and consequently,
$$Z\cap Z'\cap M =S\cap S'$$
is a $1$-dimensional subspace of $X'$.

Our next step is to show that $S$ and $S'$ are compatible to both $X',Y'$. 
We will use the following fact \cite[Lemma 1.14]{Pankov-book}: 
if a closed subspace $A\subset H$ is compatible to closed subspaces $B,C\subset H$, 
then $A$ is also compatible to $B\cap C$, $\overline{B+C}$ and $B^{\perp}$.
Since $Z$ is compatible to $X\cap Y$ and $X+Y$, it is compatible to 
$$M=(X+Y)\cap (X\cap Y)^{\perp}.$$
Then $Z$ is compatible to $X'=X\cap M$. So, $X'$ is compatible to $Z$ and $M$ 
(the latter follows from the fact that $X'\subset M$) and, consequently, it is compatible to $S=Z\cap M$.
Similarly, we establish that $X'$ is compatible to $S'$ and $Y'$ is compatible to both $S,S'$.

Using the fact that $S,S'$ are compatible to both $X',Y'$, 
we establish that $X'$ and $Y'$ are orthogonal which immediately  implies that $X$ and $Y$ are compatible.

It was noted above that $S\cap S'$ is a $1$-dimensional subspace of $X'$.
We denote this subspace by $P$ and write $Q$ for the unique $1$-dimensional subspace of $X'$ orthogonal to $P$.
Since $X'$ and $S$ are compatible $2$-dimensional subspaces,  $Q$ is orthogonal to $S$. 
Similarly, it is orthogonal to $S'$.
Then $Q$ is orthogonal to $S+S'$.
Since $$Z=(X\cap Y)+S\;\mbox{ and }\;Z'=(X\cap Y)+S'$$
are ortho-adjacent to $Y=(X\cap Y)+Y'$,
the subspaces $S$ and $S'$ intersect $Y'$ in $1$-dimensional subspaces $P_1$ and $P_2$,
respectively.
These subspaces are distinct (otherwise, they coincide with $S\cap S'$ 
and $X'\cap Y'\ne 0$ which is impossible).
Therefore, $Y'=P_1+P_2$ is contained in $S+S'$,  and consequently, $Q$ is orthogonal to $Y'$.

Now, we show that $P$ is orthogonal to $Y'$. 
Denote by $Q_i$ the $1$-dimensional subspace of $Y'$ orthogonal to $P_i$, $i=1,2$.
We have $Q_1\ne Q_2$, since $P_1\ne P_2$.
Then $Q_1$ is orthogonal to $S$ (since $S$ and $Y'$ are compatible) and, for the same reasons,
$Q_2$ is orthogonal to $S'$. 
This implies that both $Q_1,Q_2$ are orthogonal to $P=S\cap S'$,
and hence, $Y'=Q_1+Q_2$ is orthogonal to $P$.
Then $X'=P+Q$ is orthogonal to $Y'$ and we get the claim.
\end{proof}

By assumption, we have $4\le 2k\le \dim H$ and distinguish the following three cases:
\begin{enumerate}
\item[$\bullet$] $k\le \dim H-4$ which means that $k\ge 4$ or $k=3$, $\dim H\ge 7$ or $k=2$, $\dim H\ge 6$.
\item[$\bullet$] $k=\dim H-3$ which means that $\dim H=2k=6$ or $k=2$, $\dim H=5$.
\item[$\bullet$] $k=\dim H-2$ which means that $\dim H=2k=4$.
\end{enumerate}
We need the following clarification  of Lemma \ref{lemma-adnoncom}.

\begin{lemma}\label{lemma2-3}
For non-compatible  adjacent $k$-dimensional subspaces $X,Y\subset H$
the following assertions are fulfilled:
\begin{enumerate}
\item[{\rm (1)}] If $k\le \dim H-4$, then 
there are infinitely many $k$-dimensional subspaces $Z\subset H$ ortho-adjacent to both $X,Y$ 
and such that there are infinitely many $k$-dimensional subspaces $Z'\subset H$ ortho-adjacent to $X,Y,Z$.
\item[{\rm (2)}] If $k=\dim H -3$, then 
there are infinitely many $k$-dimensional subspaces $Z\subset H$ ortho-adjacent to both $X,Y$ 
and such that there is precisely one $Z'\subset H$ ortho-adjacent to $X,Y,Z$.
\item[{\rm (3)}] If $\dim H=2k=4$, then there are precisely two $2$-dimensional subspaces 
ortho-adjacent to both $X,Y$; these subspaces are orthogonal.
\end{enumerate}
\end{lemma}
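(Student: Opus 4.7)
The plan is to apply Lemmas \ref{lemma-ad-1} and \ref{lemma-ad-2} to list explicitly every $k$-dimensional subspace $Z\subset H$ ortho-adjacent to both $X$ and $Y$. These split into the \emph{type-1} family $Z=(X\cap Y)+P$, with $P$ a $1$-dimensional subspace of $(X+Y)^{\perp}$, and the \emph{type-2} family $Z=S+W$, with $S=(X+Y)\cap(X\cap Y)^{\perp}$ the fixed $2$-dimensional orthogonal complement of $X\cap Y$ in $X+Y$ and $W$ a $(k-2)$-dimensional subspace of the $(k-1)$-dimensional space $X\cap Y$. Note that $\dim(X+Y)^{\perp}=\dim H-k-1$ equals $\ge 3$, $2$, $1$ in cases (1), (2), (3) respectively.

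The key observation for (1) and (2) is that a type-1 $Z$ can never be adjacent to a type-2 $Z'=S+W'$, because $Z\cap Z'=W'$ has dimension $k-2$ (since $P\cap(X+Y)=0$ and $S\cap(X\cap Y)=0$). Consequently, every $Z'$ ortho-adjacent to $X,Y,Z$ must itself be of type 1, say $Z'=(X\cap Y)+P'$ with $P'\subset(X+Y)^{\perp}$ and $P'\ne P$. Using that $X\cap Y\perp(X+Y)^{\perp}$, the projections decompose as $P_Z=P_{X\cap Y}+P_P$ and $P_{Z'}=P_{X\cap Y}+P_{P'}$, so compatibility of $Z$ and $Z'$ reduces to commutation of the rank-one projections $P_P$ and $P_{P'}$, which for distinct $1$-dimensional subspaces is equivalent to $P\perp P'$. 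Hence the set of admissible $Z'$ is in bijection with the $1$-dimensional subspaces of $P^{\perp}\cap(X+Y)^{\perp}$. In case (1), $\dim(X+Y)^{\perp}\ge 3$ gives infinitely many $P$, and for each such $P$ the space $P^{\perp}\cap(X+Y)^{\perp}$ has dimension $\ge 2$ and therefore contains infinitely many $P'$. In case (2), $\dim(X+Y)^{\perp}=2$ still gives infinitely many $P$, but now $P^{\perp}\cap(X+Y)^{\perp}$ is $1$-dimensional, yielding a unique $P'$ and hence a unique $Z'$.

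In case (3), with $\dim H=2k=4$, the parametrisation degenerates completely: $(X+Y)^{\perp}$ is $1$-dimensional, so type 1 produces only $Z_1=(X\cap Y)+(X+Y)^{\perp}$, and $X\cap Y$ is $1$-dimensional, forcing $W=0$ and leaving only the type-2 subspace $Z_2=S$. These are distinct since $Z_2\subset X+Y$ while $Z_1\not\subset X+Y$, and the orthogonality $Z_1\perp Z_2$ follows immediately from $S\perp X\cap Y$ (by the definition of $S$) combined with $S\subset X+Y$, which gives $S\perp(X+Y)^{\perp}$. The main technical point in executing this plan is the reduction of compatibility between two type-1 subspaces to orthogonality of the defining lines $P,P'$; once this is in hand, the remainder is a straightforward dimension count.
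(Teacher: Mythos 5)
Your proof is correct and follows essentially the same route as the paper: both enumerate the common ortho-neighbours of $X,Y$ via Lemmas \ref{lemma-ad-1} and \ref{lemma-ad-2}, reduce ortho-adjacency of two type-1 subspaces to orthogonality of the lines $P,P'\subset(X+Y)^{\perp}$, and use the non-adjacency of a type-1 and a type-2 subspace to obtain the uniqueness in case (2). Your write-up is, if anything, slightly more explicit in that it fully classifies all admissible $Z'$ rather than merely exhibiting enough of them.
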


\begin{proof}
(1). Since $k\le \dim H-4$ and $X+Y$ is $(k+1)$-dimensional, we have
$$\dim (X+Y)^{\perp}\ge 3$$
and for every $1$-dimensional subspace $P\subset (X+Y)^{\perp}$ there are infinitely many 
$1$-dimensional subspaces $Q\subset (X+Y)^{\perp}$ orthogonal to $P$.
For any such $P$ and $Q$ the $k$-dimensional subspaces
\begin{equation}\label{eq-subs}
P+(X\cap Y), Q+(X\cap Y), X,Y
\end{equation}
are mutually ortho-adjacent.

(2). If $k=\dim H -3$, then the subspace $(X+Y)^{\perp}$ is $2$-dimensional
and for any $1$-dimensional subspace $P\subset (X+Y)^{\perp}$
there is a unique $1$-dimensional subspace $Q\subset (X+Y)^{\perp}$ orthogonal to $P$.
As above, the $k$-dimensional  subspaces \eqref{eq-subs}
are mutually ortho-adjacent. 
The $k$-dimensional subspace $S+W$, 
where $S$ is the orthogonal complement of $X\cap Y$ in $X+Y$ and 
$W$ is a $(k-2)$-dimensional subspace of $X\cap Y$ (see Lemma \ref{lemma-ad-2}),
is not adjacent to $P+(X\cap Y)$.
So, $Q+(X\cap Y)$ is the unique $k$-dimensional subspace ortho-adjacent to $X,Y, P+(X\cap Y)$.

(3). 
Suppose that $\dim H=2k=4$.
By Lemma \ref{lemma-adnoncom}, there are precisely two $2$-dimensional subspaces
ortho-adjacent to both $X,Y$:
one of them is the sum of $X\cap Y$ and $(X+Y)^{\perp}$ and
the second is the orthogonal complement of $X\cap Y$ in $X+Y$.
\end{proof}

We can characterise the adjacency relation in terms of ortho-adjacency if $k\le \dim H-3$.

\begin{lemma}\label{lemma2-4}
Let $X$ and $Y$ be $k$-dimensional subspaces of $H$ such that the distance between $X,Y$
in $\Gamma^{\perp}_{k}(H)$ is equal to $2$. Then the following assertions are fulfilled:
\begin{enumerate}
\item[{\rm (1)}] In the case when $k\le \dim H-4$, the subspaces $X,Y$ are adjacent and non-compatible
if and only if
there are infinitely many $k$-dimensional subspaces $Z\subset H$ ortho-adjacent to both $X,Y$ 
and such that there are infinitely many $k$-dimensional subspaces $Z'\subset H$ ortho-adjacent to $X,Y,Z$.
\item[{\rm (2)}] In the case when $k=\dim H -3$, the subspaces $X,Y$ are adjacent and non-compatible
if and only if
there are infinitely many $k$-dimensional subspaces $Z\subset H$ ortho-adjacent to both $X,Y$ 
and such that there is precisely one $Z'\subset H$ ortho-adjacent to $X,Y,Z$.
\end{enumerate}
\end{lemma}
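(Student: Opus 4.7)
The plan is to deduce both parts simultaneously by a short case analysis feeding into Lemmas \ref{lemma2-1}, \ref{lemma2-2}, and \ref{lemma2-3}. The forward direction is exactly Lemma \ref{lemma2-3}(1) for part (1) and Lemma \ref{lemma2-3}(2) for part (2), so the real work lies entirely in the converse.

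For the converse, I would assume the ``infinitely many $Z$\ldots'' condition of the relevant part holds. Since $X,Y$ are at distance $2$ in $\Gamma^{\perp}_{k}(H)$ they are not ortho-adjacent, and since some $Z$ is ortho-adjacent (hence adjacent) to both $X$ and $Y$, property (D1) forces $\dim(X\cap Y)\in\{k-1,k-2\}$. The case $\dim(X\cap Y)=k-1$ already gives the desired conclusion: $X,Y$ are adjacent, and non-compatible because they are not ortho-adjacent. So the whole task is to rule out $\dim(X\cap Y)=k-2$, which I would split into two sub-cases. If $X,Y$ are compatible with $\dim(X\cap Y)=k-2$, Lemma \ref{lemma2-1} says that for every such $Z$ there are precisely two $k$-dimensional subspaces ortho-adjacent to each of $X,Y,Z$, contradicting both ``infinitely many $Z'$'' (part (1)) and ``precisely one $Z'$'' (part (2)). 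If $X,Y$ are non-compatible with $\dim(X\cap Y)=k-2$, the contrapositive of Lemma \ref{lemma2-2} forbids any ortho-adjacent pair $Z,Z'$ from both being ortho-adjacent to $X,Y$; so no $Z'$ ortho-adjacent to $X,Y,Z$ exists at all, again contradicting the assumed condition in either part.

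The hard geometric content has already been carried out in the three preceding lemmas, which pin down the exact number of common ortho-neighbours of $X,Y,Z$ ($\infty$, $2$, $1$, or $0$) in each of the four possible configurations of $X,Y$ at distance $2$ in $\Gamma^{\perp}_{k}(H)$. The present statement is then a clean reading of those counts: the dimensional hypotheses $k\le\dim H-4$ versus $k=\dim H-3$ serve only to select between ``infinitely many $Z'$'' and ``precisely one $Z'$'' in the forward direction, and these two values are exactly the ones that cannot occur in the competing configurations ruled out above. Consequently I expect no significant obstacle beyond this bookkeeping.
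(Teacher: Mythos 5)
Your proposal is correct and follows essentially the same route as the paper: the forward direction is Lemma \ref{lemma2-3}, and the converse reduces to excluding the case $\dim(X\cap Y)=k-2$ via Lemmas \ref{lemma2-1} and \ref{lemma2-2}. The only cosmetic difference is that the paper applies Lemma \ref{lemma2-2} directly (the hypothesis supplies an ortho-adjacent pair $Z,Z'$, forcing compatibility, which Lemma \ref{lemma2-1} then contradicts), whereas you split into compatible and non-compatible sub-cases and invoke the contrapositive of Lemma \ref{lemma2-2} in the latter; the two organisations are logically equivalent.
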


\begin{proof}
(1). If $X,Y$ are adjacent and non-compatible, then 
the required condition  holds by the statement (1) of Lemma \ref{lemma2-3}.
Conversely, suppose that  
there are infinitely many $k$-dimensional subspaces $Z\subset H$ ortho-adjacent to both $X,Y$ 
and such that there are infinitely many $k$-dimensional subspaces $Z'\subset H$ ortho-adjacent to $X,Y,Z$.
If $X,Y$ are not adjacent, then their intersection is $(k-2)$-dimensional.
Since there is a pair of ortho-adjacent $k$-dimensional subspaces of $H$ which are ortho-adjacent to both $X,Y$,
Lemma \ref{lemma2-2} implies that $X$ and $Y$ are compatible.
Then, by Lemma \ref{lemma2-1}, for every $k$-dimensional subspace $Z\subset H$ ortho-adjacent to both $X,Y$
there are precisely two $k$-dimensional subspaces of $H$ ortho-adjacent to $X,Y,Z$
which contradicts our assumption. 
So, $X,Y$ are adjacent. 
Since the distance between $X$ and $Y$ in $\Gamma^{\perp}_{k}(H)$ is equal to $2$, they are non-compatible.

(2). We use the same arguments and the statement (2) of Lemma \ref{lemma2-3} instead of the statement (1).
\end{proof}

Now, we can prove Theorem \ref{theorem-ortho-gr}.
Let $f$ be an automorphism of $\Gamma^{\perp}_{k}(H)$.
If $k\ge 3$  or $\dim H\ge 5$ and $k=2$,
then $f$ is an  automorphism of $\Gamma_{k}(H)$ by Lemma \ref{lemma2-4} and
the required statement follows from Lemma \ref{lemma-aut}.

\begin{rem}{\rm
In the case when $\dim H=2k=4$,
the adjacency relation cannot be characterised in terms of ortho-adjacency.
If $\dim H=4$ and $X,Y$ are non-compatible adjacent $2$-dimensional subspaces of $H$,
then there are precisely two $2$-dimensional subspaces $Z,Z'\subset H$ ortho-adjacent to both $X,Y$
and $Z$ is orthogonal to $Z'$ (the statement (3) from Lemma \ref{lemma2-3}).
By Example \ref{exmp-4-2}, a $2$-dimensional subspace of $H$ is ortho-adjacent to $Y^{\perp}$
if and only if it is ortho-adjacent to $Y$.
The subspaces $X,Y^{\perp}$ are non-compatible and 
their intersection is zero ($X,Y$ are non-compatible, i.e. $X$ does not contain a $1$-dimensional subspace orthogonal to $Y$).
So, there are precisely two $2$-dimensional subspaces of $H$ ortho-adjacent to both $X,Y^{\perp}$ ($Z$ and $Z'$) and these subspaces are orthogonal.
}\end{rem}

\begin{rem}{\rm
In the case when $\dim H=2k=4$, the orthogonality relation can be characterised in terms of 
ortho-adjacency as follows: $2$-dimensional subspaces $X,Y\subset H$ are orthogonal
if and only if for every $2$-dimensional subspace $Z\subset H$ ortho-adjacent to both $X,Y$
there are precisely two $2$-dimensional subspaces of $H$ ortho-adjacent to $X,Y,Z$
(we use Lemma \ref{lemma2-1} and arguments from the proof of Lemma \ref{lemma2-4}).
So, for $\dim H=4$ every automorphism $f$ of $\Gamma^{\perp}_{2}(H)$ is orthogonality preserving in both directions;
since two $2$-dimensional subspaces of $H$ are compatible if and only if 
they are ortho-adjacent or orthogonal, $f$ is compatible preserving in both directions.
Compatibility preserving transformations of Hilbert Grassmannians are determined 
except the case when $\dim H=2k$ is equal $4$ or $6$ (see \cite[Chapter 5]{Pankov-book} and \cite{Pank-comp}).
}\end{rem}

\section{Generalised ortho-Grassmann graphs associated to conjugacy classes of finite-rank self-adjoint operators}

Recall that two operators $A$ and $B$ on $H$  are {\it unitary conjugate} if 
there is a unitary operator $U$ on $H$ such that $B=UAU^{*}$.
For example, any two rank-$m$ projections are unitary conjugate 
and ${\mathcal P}_{m}(H)$ is the conjugacy class formed by all such projections.
Let ${\mathcal C}$ be  a conjugacy class consisting of finite-rank self-adjoint operators on $H$.
This class  is completely determined by the spectrum $\sigma=\{a_1,\dots,a_k\}$ of operators from ${\mathcal C}$ 
(each eigenvalue $a_i$ is real) and  the set $d=\{n_{1},\dots,n_k\}$, 
where $n_i$ is the dimension of the eigenspaces corresponding to the eigenvalue $a_i$.
The eigenspaces of every operator from ${\mathcal C}$ are mutually orthogonal,
their sum is $H$
and the eigenspace corresponding to every non-zero $a_i$ is finite-dimensional.
If one of $a_i$ is zero, then the kernels of operators from ${\mathcal C}$  are non-trivial.
If $H$ is infinite-dimensional, then $0\in \sigma$ and the kernels are infinite-dimensional
(since operators from ${\mathcal C}$ are of finite rank and their kernels are the orthogonal complements of the images).

In what follows, the conjugacy class ${\mathcal C}$ will be denoted by ${\mathcal G}(\sigma, d)$
and called the $(\sigma, d)$-{\it Grassmannian}.
For example, if $\sigma=\{0,\lambda\}$, where $\lambda$ is a non-zero real number,  
and $d=\{\dim H-m,m\}$, then ${\mathcal G}(\sigma, d)$ is the conjugacy class 
$\lambda{\mathcal P}_{m}(H)$ formed by all $\lambda$-multiples of rank-$m$ projections.
If $\dim H=n$ is finite and $d=\{n\}$,
then ${\mathcal G}(\sigma, d)$ consists of a unique operator which is a scalar multiple of the identity.

If $A\in {\mathcal G}(\sigma, d)$ and 
$X_i$ is the eigenspace of $A$ corresponding to $a_i$,
then
$$A=\sum^{k}_{i=1}a_{i}P_{X_i}.$$
Let $S(d)$ be the group of all permutations $\delta$ on $\{1,\dots,k\}$ such that
$n_{\delta(i)}=n_i$
(the group is trivial if all $n_i$ are mutually distinct).
For every permutation $\delta \in S(d)$  the operator
$$\delta(A)=\sum^{k}_{i=1}a_i P_{X_{\delta(i)}}$$
belongs to ${\mathcal G}(\sigma, d)$.
For example, if $\dim H=2m$, $\sigma=\{0,1\}$ and $d=\{m,m\}$,
then  ${\mathcal G}(\sigma, d)={\mathcal P}_{m}(H)$
and $S(d)$  coincides with $S_2$;
furthermore, if $\delta$ is the non-trivial element of $S(d)$, 
then for every projection $P_{X}\in{\mathcal P}_{m}(H)$ 
we have $\delta(P_{X})=P_{X^{\perp}}$.

Operators $A,B\in {\mathcal G}(\sigma, d)$ are called {\it adjacent} if 
the following conditions are satisfied:
\begin{enumerate}
\item[(A1)] the rank of $A-B$ is equal to $2$,  
\item[(A2)] the kernel and the image of $A-B$ are invariant to both $A,B$;
\end{enumerate}
see  \cite{PPZ} for the details.
This concept admits a simple interpretation in terms of eigenspaces.
If $H$ is infinite-dimensional, then $0\in \sigma$ and the kernels of operators from  ${\mathcal G}(\sigma, d)$ are infinite-dimensional, but their codimension is finite. 
Two closed subspaces of the same finite codimension are called {\it adjacent} if their orthogonal complements are
adjacent, and we say that these subspaces are {\it ortho-adjacent} if the orthogonal complements are
ortho-adjacent.
For every $i\in \{1,\dots,k\}$ denote by $X_i$ and $Y_i$ the eigenspaces of $A$ and $B$ (respectively)
corresponding to the eigenvalue $a_i$.
Then $A$ and $B$ are adjacent  if and only if 
there are distinct $i,j\in \{1,\dots,k\}$ such that the following assertions are fulfilled:
\begin{enumerate}
\item[$\bullet$] $X_{t}=Y_{t}$ for all $t\ne i,j$, and consequently, $X_i+X_j= Y_i +Y_j$;
\item[$\bullet$] $X_i$ and $X_{j}$ are adjacent to $Y_i$ and $Y_j$, respectively\footnote{
Since $X_j$ and $Y_j$ are the orthogonal complements of $X_i$ and $Y_i$ (respectively) in 
$X_i+X_j= Y_i +Y_j$, the subspaces $X_i$ and $Y_i$ are adjacent if and only if $X_j$ and $Y_j$ are adjacent.}.
\end{enumerate}
In this case, the operators $A,B$ are said to be $(i,j)$-{\it adjacent}.
For example,  rank-$m$ projections $P_X$ and $P_Y$ are adjacent if and only if 
the $m$-dimensional subspaces $X$ and $Y$ are adjacent;
furthermore, $P_{X}$ and $P_{Y}$   commute if and only if $X$ and $Y$ are ortho-adjacent.
If $A$ and $B$ commute, then each of these operators commute with $A-B$
which immediately implies (A2). We say that $A,B$ are {\it commutatively adjacent}
if they commute and satisfy (A1). 
This means that $A,B$ are $(i,j)$-adjacent  for some distinct $i,j\in \{1,\dots,k\}$;
furthermore, $X_i$ and $X_j$ are ortho-adjacent to $Y_i$ and $Y_j$, respectively
(it is well-known that two compact self-adjoint operators commute if and only if their eigenspaces are mutually compatible).

The {\it generalised Grassmann graph} $\Gamma(\sigma, d)$ is the simple graph whose vertex set
is ${\mathcal G}(\sigma,d)$ and two operators are connected by an edge if they are adjacent.
If ${\mathcal G}(\sigma, d)={\mathcal P}_{m}(H)$, then $\Gamma(\sigma, d)=\Gamma_{m}(H)$.
The graph $\Gamma(\sigma, d)$ is connected \cite{PPZ}.
Any other $\Gamma(\sigma', d)$ is isomorphic to $\Gamma(\sigma, d)$.
Indeed, if $\sigma'=\{a'_1,\dots,a'_k\}$, then the correspondence 
\begin{equation}\label{iso}
\sum^{k}_{i=1}a_{i}P_{X_i}\to\sum^{k}_{i=1}a'_{i}P_{X_i}
\end{equation}
defines an isomorphism between these graphs.
In particular, if $k=2$ (i.e. there are precisely two eigenvalues for operators from ${\mathcal G}(\sigma,d)$),
then $\Gamma(\sigma, d)$ can be identified with the Grassmann graph $\Gamma_{m}(H)$, where 
$m$ is $n_1$ or $n_2$ (at least one of $n_1,n_2$ is finite).
For every unitary or anti-unitary operator $U$ on $H$ the transformation 
sending every $A\in {\mathcal G}(\sigma, d)$ to $UAU^*$   is an automorphism of $\Gamma(\sigma, d)$
preserving each type of adjacency and
for every non-identity $\delta\in S(d)$ the transformation sending every 
$A\in{\mathcal G}(\sigma, d)$ to $\delta(A)$ is an automorphism of $\Gamma(\sigma,d)$
which does not preserve all types of adjacency.
The following is an analogue of Chows's theorem concerning automorphisms of $\Gamma(\sigma, d)$.

\begin{theorem}[\cite{PPZ}]\label{theorem-PPZ}
Suppose that $k\ge 3$ and $n_i>1$ for all $i\in \{1,\dots,k\}$.
Then for every automorphism $f$ of $\Gamma(\sigma, d)$
there are a unitary or anti-unitary operator $U$ on $H$ and a permutation  $\delta\in S(d)$
such that 
$$f(A)=U\delta(A)U^{*}$$
for all $A\in {\mathcal G}(\sigma,d)$.
In particular, every automorphism of $\Gamma(\sigma, d)$ preserving each type of adjacency
is induced by a unitary or anti-unitary operator on $H$.  
\end{theorem}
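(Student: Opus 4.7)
The overall plan is to exploit the fibre structure of $\Gamma(\sigma,d)$ to reduce automorphisms to Chow-type data on Grassmann graphs of subspaces (Theorem \ref{theorem-chow}), extract a permutation $\delta\in S(d)$ and then a global semi-linear automorphism $L$ of $H$, and finally upgrade $L$ to a scalar multiple of a unitary or anti-unitary operator by using that the eigenspaces of every operator in ${\mathcal G}(\sigma,d)$ are pairwise orthogonal.

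I would start by analysing local structure. For each unordered pair $\{i,j\}\subset\{1,\dots,k\}$ and each ``outer frame'' $(X_t)_{t\neq i,j}$ of mutually orthogonal subspaces with $\dim X_t = n_t$, the set of operators in ${\mathcal G}(\sigma,d)$ extending this frame forms an $(i,j)$-\emph{fibre}, and within this fibre the adjacency relation of $\Gamma(\sigma,d)$ coincides with the adjacency of $n_i$-dimensional subspaces of $U_{ij} := (\sum_{t\neq i,j} X_t)^{\perp}$, so the fibre is isomorphic to $\Gamma_{n_i}(U_{ij})$. The hypothesis $n_i,n_j\ge 2$ ensures that both types (star and top) of maximal cliques of $\Gamma_{n_i}(U_{ij})$ exist and persist as maximal cliques of $\Gamma(\sigma,d)$; a clique-chasing argument, similar to the classical treatment of Grassmann graphs, then shows that every maximal clique of $\Gamma(\sigma,d)$ arises in this way. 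The combinatorial invariants (cardinality and intersection patterns with neighbouring cliques) can be used to recover the unordered pair $\{i,j\}$ from which a clique originates; the assumption $k\ge 3$ provides enough ``other'' indices for this bookkeeping to succeed.

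From this analysis any automorphism $f$ induces a bijection on the $k(k-1)/2$ adjacency types which is shown, by comparing intersection patterns of cliques, to come from a permutation $\pi$ of $\{1,\dots,k\}$ preserving the dimensions $n_i$, i.e.\ $\pi\in S(d)$. Composing $f$ with an appropriate element $\delta\in S(d)$, I reduce to the case where $f$ preserves every type of adjacency: then $f$ maps each $(i,j)$-fibre to another $(i,j)$-fibre and restricts on each to an automorphism of $\Gamma_{n_i}(U_{ij})$. Chow's theorem supplies a semi-linear automorphism $L_{\mathcal F}$ of $U_{ij}$, unique up to scalar, that realises this restriction; the case $n_i = n_j = \tfrac12 \dim U_{ij}$ (where Chow allows a dual-space possibility) is excluded by invoking global consistency of star/top types across the entire clique pattern.

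The main obstacle is the gluing step, namely producing a single semi-linear $L$ on $H$ from the collection $\{L_{\mathcal F}\}$. I would proceed by exploiting overlaps: an $(i,j)$-fibre and an $(i,l)$-fibre with $l\neq i,j$ (available because $k\ge 3$) share operators whenever their outer frames agree on the common indices, and on such shared operators the two local semi-linear maps must act compatibly on the common $i$-eigenspace; chasing along chains of overlapping fibres one absorbs the scalar ambiguities and patches the $L_{\mathcal F}$ into a single semi-linear automorphism $L$ of $H$ satisfying $f(\sum a_t P_{X_t}) = \sum a_t P_{L(X_t)}$. To conclude, observe that for every $A = \sum a_t P_{X_t}$ in ${\mathcal G}(\sigma,d)$ the subspaces $X_1,\dots,X_k$ are pairwise orthogonal, so the same must hold for $L(X_1),\dots,L(X_k)$; this forces $L$ to send orthogonal vectors to orthogonal vectors, and \cite[Proposition 4.2]{Pankov-book} then yields that $L$ is a scalar multiple of a unitary or anti-unitary operator $U$, giving $f(A) = U\delta(A)U^{*}$ as required.
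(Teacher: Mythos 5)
A preliminary remark: the paper does not prove Theorem \ref{theorem-PPZ} at all. It is imported from the reference \cite{PPZ} and used as a black box (together with Lemma \ref{lemma3-4}) to obtain Theorem \ref{theorem-ortho-gr2}, so there is no internal proof to compare yours against; what follows assesses your sketch on its own terms.

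Your overall strategy --- fibre decomposition, Chow's theorem on each fibre, extraction of $\delta\in S(d)$, gluing, and the final upgrade to a unitary or anti-unitary operator via orthogonality of eigenspaces and \cite[Proposition 4.2]{Pankov-book} --- is the natural one and is essentially the route of the cited reference. However, three steps are asserted where the real work lies. (i) The claim that every maximal clique of $\Gamma(\sigma,d)$ sits inside a single $(i,j)$-fibre requires showing both that a clique cannot mix adjacency types and that a star or top of a fibre remains maximal in the ambient graph; this is exactly where $k\ge 3$ and $n_i>1$ enter and it needs an operator analogue of the (T1)/(T2) trichotomy, not just an appeal to the classical case. (ii) When $n_i=n_j$, the dual possibility in Chow's theorem on $\Gamma_{n_i}(U_{ij})$ is not something to be ``excluded by global consistency'': composed with the orthocomplementation inside $U_{ij}$ it is precisely the transposition $(i\,j)\in S(d)$ acting on that fibre, so it must be absorbed into $\delta$, and one has to prove that the choice (semi-linear versus dual) is coherent across all fibres of the same type --- your ``global consistency'' is the statement to be proved, not an argument for it. (iii) The gluing is the crux and is only gestured at: the maps $L_{\mathcal F}$ live on proper subspaces $U_{ij}\subsetneq H$, each is determined only up to a scalar and carries its own field automorphism, and before any patching one must show that the $a_i$-eigenspace of $f(A)$ depends only on the $a_i$-eigenspace of $A$ (i.e.\ that $f$ induces a well-defined adjacency-preserving bijection of ${\mathcal G}_{n_i}(H)$, to which Chow can then be applied globally); this well-definedness is not automatic from the fibrewise description. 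Until (iii) in particular is carried out, the proposal is a correct plan rather than a proof.
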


\begin{rem}{\rm
The above statement fails for $k=2$ (since $\Gamma(\sigma, d)$ is identified with $\Gamma_{m}(H)$, 
$m\in \{n_1,n_2\}$).
}\end{rem}

Denote by $\Gamma^c(\sigma, d)$ the simple graph whose vertex set
is ${\mathcal G}(\sigma,d)$ and two operators are connected by an edge if they are commutatively adjacent.
Then $\Gamma^c(\sigma, d)=\Gamma^{\perp}_{m}(H)$ if ${\mathcal G}(\sigma, d)={\mathcal P}_{m}(H)$.
Any other $\Gamma^c(\sigma', d)$ is isomorphic to $\Gamma^c(\sigma, d)$
(if $\sigma'=\{a'_1,\dots,a'_k\}$, then \eqref{iso} is a graph isomorphism).
If $k=2$, then $\Gamma^c(\sigma, d)$ is identified with the ortho-Grassmann graph $\Gamma^{\perp}_{m}(H)$, 
where $m$ is $n_1$ or $n_2$.

\begin{prop}
The graph $\Gamma^c(\sigma, d)$ is connected.
\end{prop}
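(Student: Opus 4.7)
The plan is to combine two earlier connectedness results: the connectedness of the generalised Grassmann graph $\Gamma(\sigma, d)$, proved in \cite{PPZ} and cited in the text above, and the connectedness of the ortho-Grassmann graph $\Gamma^\perp_m(K)$ for a Hilbert space $K$ of dimension at least $3$, established in the previous section. By the former, any two operators in ${\mathcal G}(\sigma, d)$ are joined by a finite chain of adjacent operators, so it suffices to prove that any two $(i, j)$-adjacent operators $A, B \in {\mathcal G}(\sigma, d)$ are joined by a path in $\Gamma^c(\sigma, d)$.

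For such $A$ and $B$, the eigenspace decompositions $\{X_t\}$ and $\{Y_t\}$ coincide outside positions $i, j$; set $W := X_i + X_j = Y_i + Y_j$, a finite-dimensional Hilbert space of dimension $n_i + n_j$ in which $X_i$ and $Y_i$ are adjacent $n_i$-dimensional subspaces. When $\dim W \geq 3$, the plan is to apply the connectedness of $\Gamma^\perp_{n_i}(W)$ to extract a finite path $X_i = Z_0, Z_1, \ldots, Z_m = Y_i$ of consecutively ortho-adjacent $n_i$-dimensional subspaces of $W$, and lift each step to $\Gamma^c(\sigma, d)$ by defining $A_s \in {\mathcal G}(\sigma, d)$ to agree with $A$ outside positions $i, j$ and to have eigenspaces $Z_s$ at position $i$ and $W \cap Z_s^\perp$ at position $j$. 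Since the eigenspaces outside $\{i, j\}$ are orthogonal to $W$, consecutive operators $A_{s-1}$ and $A_s$ commute; and since they differ only via two ortho-adjacent eigenspaces inside $W$, a direct diagonalisation shows that $A_{s-1} - A_s$ has rank $2$. Thus each pair $A_{s-1}, A_s$ is commutatively $(i, j)$-adjacent, and concatenation yields a path $A = A_0, A_1, \ldots, A_m = B$ in $\Gamma^c(\sigma, d)$.

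The main obstacle is the degenerate case $\dim W = 2$, that is $n_i = n_j = 1$, in which $\Gamma^\perp_1(W)$ collapses to a matching of orthogonal $1$-dimensional subspaces and the direct lifting fails unless $Y_i$ is precisely the orthogonal complement of $X_i$ in $W$. The standing hypothesis $\dim H \geq 3$ excludes $k = 2$ together with $n_1 = n_2 = 1$, so this sub-case forces $k \geq 3$ and hence the existence of a third index $\ell \neq i, j$. The plan here is to detour through the at-least-$3$-dimensional Hilbert subspace $X_i + X_j + X_\ell$ by a sequence of $(i, \ell)$- and $(j, \ell)$-commutatively adjacent moves; showing that one can thereby realise $Y_i, Y_j$ at positions $i, j$ without disturbing eigenspaces at other positions is the technical core of this remaining case.
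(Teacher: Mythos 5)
Your proposal follows essentially the same route as the paper: reduce, via the connectedness of $\Gamma(\sigma,d)$ from \cite{PPZ}, to bridging a single pair of $(i,j)$-adjacent operators, and then work inside the finite-dimensional space $W=X_i+X_j=Y_i+Y_j$ using ortho-adjacency of eigenspaces. The only packaging difference is that you invoke the connectedness of $\Gamma^{\perp}_{n_i}(W)$ wholesale and lift an entire path $Z_0,\dots,Z_m$, whereas the paper inserts a single midpoint: it picks one $n_i$-dimensional $Z_i\subset V$ ortho-adjacent to both $X_i$ and $Y_i$ (this exists by Lemmas \ref{lemma-ad-1} and \ref{lemma-ad-2} applied inside $V$), takes $Z_j=V\cap Z_i^{\perp}$, and obtains one operator $C$ commutatively $(i,j)$-adjacent to both $A$ and $B$. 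Since the connectedness of $\Gamma^{\perp}_{n_i}(W)$ itself rests on exactly those two lemmas, the two arguments have the same mathematical content; the paper's version just gives a path of length $2$ instead of length $m$. Your lifting step is sound: $A_{s-1}-A_s=(a_i-a_j)(P_{Z_{s-1}}-P_{Z_s})$ has rank $2$, and compatibility of $Z_{s-1},Z_s$ propagates to their complements in $W$ and to the untouched eigenspaces orthogonal to $W$.

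The one place you stop short is the degenerate sub-case $n_i=n_j=1$ with $X_i,Y_i$ non-orthogonal, which you correctly identify but do not finish. Be aware that the paper's own proof tacitly requires the same non-degeneracy (its choice of $Z_i\subset V$ ortho-adjacent to both $X_i,Y_i$ is impossible when $\dim V=2$ and $X_i\not\perp Y_i$); the paper disposes of $k=2$ separately by identifying $\Gamma^c(\sigma,d)$ with $\Gamma^{\perp}_m(H)$, and for $k\ge 3$ the detour through a third index $\ell$ that you sketch is the natural repair. So your write-up is not complete as it stands, but the missing case is genuinely marginal and is not treated more fully in the paper either.
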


\begin{proof}
Proposition 1 gives the claim for  $k=2$.
Suppose that $k\ge 3$ and $A,B\in {\mathcal G}(\sigma,d)$ are non-commutative and $(i,j)$-adjacent
for some $i,j\in \{1,\dots,k\}$.
For every $t\in \{1,\dots,k\}$ denote by $X_t$ and $Y_t$ the eigenspaces of $A$ and $B$ (respectively)
corresponding to $a_t$.
Then $X_t=Y_t$ for all $t\ne i,j$ and $X_i,X_j$ are adjacent to $Y_i,Y_j$, respectively. 
We write $V$ for the $(n_i+n_j)$-dimensional subspace $X_i+X_j=Y_i+Y_j$
and take any $n_i$-dimensional subspace $Z_i\subset V$ ortho-adjacent to both $X_i,Y_i$.
Let $Z_j$ be the orthogonal complement of $Z_i$ in $V$.
The $n_2$-dimensional subspace $Z_j$ is ortho-adjacent to both $X_j,Y_j$.
Consider the operator $C\in {\mathcal G}(\sigma,d)$ 
whose eigenspace corresponding to $a_{t}$ with $t\ne i,j$ is $X_t=Y_t$
and the eigenspaces corresponding to $a_i$ and $a_j$ are $Z_i$ and $Z_j$, respectively.
This operator is commutatively $(i,j)$-adjacent to both $A,B$.
The connectedness of $\Gamma^c(\sigma, d)$ follows from the fact that
$\Gamma(\sigma, d)$ is connected.
\end{proof}

Theorem \ref{theorem-ortho-gr} can be generalised as follows.

\begin{theorem}\label{theorem-ortho-gr2}
Suppose that $k\ge 2$ and $n_i>1$ for all $i\in \{1,\dots,k\}$.
Also, we require that there is at most one $i$ such that $n_i=2$.
Then for every automorphism $f$ of $\Gamma^c(\sigma, d)$
there are a unitary or anti-unitary operator $U$ on $H$ and a permutation  $\delta\in S(d)$
such that 
$$f(A)=U\delta(A)U^{*}$$
for all $A\in {\mathcal G}(\sigma,d)$.
\end{theorem}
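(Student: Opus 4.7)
The strategy is to reduce Theorem \ref{theorem-ortho-gr2} to Theorem \ref{theorem-PPZ} by showing that any automorphism $f$ of $\Gamma^c(\sigma,d)$ is also an automorphism of $\Gamma(\sigma,d)$; the case $k=2$ must be disposed of separately since Theorem \ref{theorem-PPZ} does not apply there, and for this I would invoke Theorem \ref{theorem-ortho-gr}.

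For $k=2$, the isomorphism \eqref{iso} lets me identify $\mathcal{G}(\sigma,d)$ with $\mathcal{G}_{m}(H)$ for some $m\in\{n_1,n_2\}$ by sending each $A$ to the eigenspace corresponding to a distinguished non-zero eigenvalue. Under this identification $\Gamma^c(\sigma,d)$ becomes $\Gamma^{\perp}_{m}(H)$. If $n_1\ne n_2$ or $H$ is infinite-dimensional, then $\dim H\ne 2m$, and Theorem \ref{theorem-ortho-gr} directly gives $f(A)=UAU^{*}$ with trivial $\delta$. If $n_1=n_2=m$, the hypothesis excludes $m=2$, so $\dim H=2m\ge 6$, and Theorem \ref{theorem-ortho-gr} gives either $f(A)=UAU^{*}$ or the composition with the orthocomplementary map, which corresponds exactly to conjugation by $U$ followed by the non-trivial element of $S(d)=S_2$ (swapping the two eigenspaces of $A$ is the same as sending $P_X$ to $P_{X^\perp}$).

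For $k\ge 3$, I would extend Lemma \ref{lemma2-4} to the generalised setting, characterising ordinary $\Gamma(\sigma,d)$-adjacency purely in terms of the graph $\Gamma^c(\sigma,d)$. Given non-commutatively $(i,j)$-adjacent operators $A,B$ with eigenspaces $X_\ell,Y_\ell$ satisfying $X_\ell=Y_\ell$ for $\ell\ne i,j$, set $V:=X_i+X_j=Y_i+Y_j$; this subspace has dimension $n_i+n_j$. Any $C\in\mathcal{G}(\sigma,d)$ commutatively adjacent to both $A$ and $B$ must share the eigenspace $X_\ell$ for $\ell\ne i,j$, while its $a_i$-eigenspace $Z_i\subset V$ is ortho-adjacent to $X_i$ and $Y_i$ in the Grassmannian of $V$, and its $a_j$-eigenspace is the orthogonal complement of $Z_i$ in $V$. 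Thus the enumeration of such intermediate $C$'s and their further extensions $C'$ reduces to the ortho-Grassmann analysis of Section 4 carried out inside $V$. The hypothesis that at most one $n_\ell$ equals $2$ forces $\max(n_i,n_j)\ge 3$, so after possibly exchanging $i,j$ we have $n_i\le\dim V-3$, and Lemmas \ref{lemma2-3} and \ref{lemma2-4} translate verbatim to this ambient. In parallel with Lemmas \ref{lemma2-1} and \ref{lemma2-2}, I would verify that pairs $A,B$ at $\Gamma^c$-distance two that are \emph{not} $\Gamma(\sigma,d)$-adjacent yield a strictly different cardinality pattern: either no commutatively adjacent intermediate exists at all (because $A,B$ differ on three or more indices), or the intermediates are forced into a constrained configuration admitting only a finite, bounded number of common extensions.

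The main obstacle I anticipate is this last step — controlling distance-two pairs whose intermediate operators may mix different pairs $(i,j)$ and $(s,t)$ — where one must check that the counting dichotomy of Lemma \ref{lemma2-4} persists in the presence of additional frozen eigenspaces and cannot be mimicked by genuinely non-adjacent pairs. Once $f$ is shown to preserve $\Gamma(\sigma,d)$-adjacency, Theorem \ref{theorem-PPZ} supplies a unitary or anti-unitary $U$ and a permutation $\delta\in S(d)$ with $f(A)=U\delta(A)U^{*}$, and the converse direction (that every such map is an automorphism of $\Gamma^c(\sigma,d)$) is immediate since both unitary conjugation and the action of $S(d)$ preserve commutativity and the rank of differences.
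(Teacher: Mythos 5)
Your proposal follows essentially the same route as the paper: the case $k=2$ is reduced to Theorem \ref{theorem-ortho-gr} via the identification of $\Gamma^c(\sigma,d)$ with $\Gamma^{\perp}_{m}(H)$ (with the orthocomplementary map matching the non-trivial element of $S(d)$), and for $k\ge 3$ adjacency is characterised by transporting the counting dichotomy of Lemma \ref{lemma2-4} into $V_{ij}=X_i+X_j$, where the hypothesis on the $n_i$ guarantees $\min\{n_i,n_j\}\le\dim V_{ij}-3$, after which Theorem \ref{theorem-PPZ} concludes. One small correction: a pair $A,B$ at $\Gamma^c$-distance $2$ differing in three or four eigenspaces always has at least one, and possibly up to six, common commutatively adjacent neighbours (the paper's Lemma \ref{lemma3-3}), so your claim that no intermediate exists in that case is not right; what the argument actually needs, and what your alternative branch correctly supplies, is only that the number of such neighbours is finite, so the ``infinitely many $C$'' conditions fail.
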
  

If $k=2$, then $\Gamma^c(\sigma, d)$ is identified with $\Gamma^{\perp}_{m}(H)$, $m\in \{n_1,n_2\}$
and Theorem \ref{theorem-ortho-gr2} follows from Theorem \ref{theorem-ortho-gr}
(see Remark \ref{rem-theorem2}).
If $k=2$ and $n_1=n_2=2$, then $\dim H=n_1+n_2=4$, the graph $\Gamma^c(\sigma, d)$ is identified with $\Gamma^{\perp}_2(H)$ and the statement fails by Example \ref{exmp-4-2}.

We prove Theorem \ref{theorem-ortho-gr2} for $k\ge 3$ in the next section.
We will use a modification of arguments from Section 4
to investigate geodesics of length $2$ in $\Gamma^c(\sigma, d)$.

\section{Proof of Theorem \ref{theorem-ortho-gr2}}
Suppose that $k\ge 3$, $n_i>1$ for all $i\in \{1,\dots,k\}$
and there is at most one $i$ such that $n_i=2$.

Let $A$ and $B$ be operators from ${\mathcal G}(\sigma,d)$
such that the distance between them in $\Gamma^c(\sigma,d)$ is equal to $2$.
For every $i\in \{1,\dots,k\}$ we denote by $X_i$ and $Y_i$ the eigenspaces of $A$ and $B$ (respectively)
corresponding to the eigenvalue $a_i$.
Consider any $C\in{\mathcal G}(\sigma,d)$ commutatively adjacent to both $A,B$.
There are precisely two eigenspaces of $C$ distinct from the corresponding eigenspaces of $A$
and the same holds for $B$. 
Therefore, there are at most four indices $i$ such that $X_i\ne Y_i$
(we have $X_i\ne Y_i$ for at least two $i$, otherwise $A=B$).

We start from the case when $X_i\ne Y_i$ for precisely two indices $i$.

\begin{lemma}\label{lemma3-1}
Suppose that $X_i\ne Y_i$, $X_j\ne Y_j$ and $X_t=Y_t$ for $t\ne i,j$.
If $C\in{\mathcal G}(\sigma,d)$ is commutatively adjacent to both $A,B$,
then it is commutatively $(i,j)$-adjacent to $A$ and $B$.
\end{lemma}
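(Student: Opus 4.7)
The plan is to let $Z_t$ denote the eigenspace of $C$ corresponding to $a_t$, so that commutative adjacency of $C$ to $A$ furnishes indices $p\ne q$ with $Z_t=X_t$ for $t\notin\{p,q\}$ and $Z_p,Z_q$ ortho-adjacent to $X_p,X_q$ respectively; analogously, commutative adjacency to $B$ provides $r\ne s$ with the same property for the $Y_t$. The conclusion of the lemma is equivalent to the equalities $\{p,q\}=\{r,s\}=\{i,j\}$, and I would establish this by a case analysis on the size of $\{p,q\}\cap\{i,j\}$.

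The two ``extreme'' cases are easy bookkeeping from the three systems of equalities $X_t=Y_t$ for $t\ne i,j$, $\;Z_t=X_t$ for $t\notin\{p,q\}$, and $Z_t=Y_t$ for $t\notin\{r,s\}$. If $\{p,q\}\cap\{i,j\}=\emptyset$, then for every $t\in\{i,j\}$ we have $Z_t=X_t\ne Y_t$, forcing $\{i,j\}\subset\{r,s\}$ and hence $\{r,s\}=\{i,j\}$; the same argument applied to any $t\in\{p,q\}$ then contradicts $X_t=Y_t$. The case $\{p,q\}=\{i,j\}$ is the desired one; from it, swapping the roles of $A$ and $B$ yields $\{r,s\}=\{i,j\}$ as well.

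The main obstacle is the remaining case $|\{p,q\}\cap\{i,j\}|=1$, which I would handle, say, with $p=i$ and $q\notin\{i,j\}$. Bookkeeping as above gives $\{r,s\}=\{j,q\}$, and in particular $Z_i=Y_i$ since $i\notin\{r,s\}$. To finish, I would exploit the fact that $U:=X_i+X_j=Y_i+Y_j$ is a fixed subspace, namely the orthogonal complement in $H$ of the common eigenspaces $X_t=Y_t$, $t\ne i,j$; in particular $X_q\perp U$. The $(i,q)$-adjacency of $C$ with $A$ forces $Z_i+Z_q=X_i+X_q$, that is $Y_i+Z_q=X_i+X_q$; consequently $Y_i\subset X_i+X_q$. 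But $Y_i\subset U$ and $U\cap(X_i+X_q)=X_i$ (as $U\cap X_q=0$, so any vector in the intersection has zero $X_q$-component), hence $Y_i\subset X_i$; a dimension count gives $Y_i=X_i$, contradicting $X_i\ne Y_i$. This rules out the one-element intersection case and completes the argument.
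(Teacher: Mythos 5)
Your proposal is correct, and its skeleton is the same as the paper's: denote the eigenspaces of $C$ by $Z_t$, and run a case analysis on how the index pair realising the adjacency of $A,C$ meets $\{i,j\}$. The disjoint case is handled identically in both proofs (four eigenspaces of $C$ would differ from those of $B$, contradicting adjacency). The genuinely different step is the one-common-index case. The paper stays combinatorial: after pinning down which eigenspaces of $C$ must coincide with those of $A$ and of $B$, it concludes that $X_i$ is ortho-adjacent to $Y_i$ and $X_j$ to $Y_j$, so $A$ and $B$ would be commutatively $(i,j)$-adjacent --- contradicting the standing assumption that their distance in $\Gamma^c(\sigma,d)$ equals $2$. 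You instead argue geometrically, using the eigenspace-sum identity $Z_i+Z_q=X_i+X_q$ forced by $(i,q)$-adjacency together with $Z_i=Y_i$ and $Y_i\subset U=X_i+X_j$ to get $Y_i\subset U\cap(X_i+X_q)=X_i$, hence $Y_i=X_i$, contradicting $X_i\ne Y_i$ outright. This buys a slightly stronger statement (the one-common-index configuration is impossible even without assuming $d(A,B)=2$ in $\Gamma^c(\sigma,d)$, so that hypothesis is never needed in your argument), at the cost of a short subspace computation; the paper's route is shorter and leans on the hypothesis already in force. One small point to tidy: if $a_i=0$ and the corresponding eigenspaces are infinite-dimensional, the final ``dimension count'' should be a codimension count --- $Y_i\subseteq X_i$ with both closed of the same finite codimension in $H$ forces $Y_i=X_i$.
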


\begin{proof}
For every $t\in \{1,\dots,k\}$ we denote by $Z_t$ the eigenspace of $C$ corresponding to $a_t$. 
Suppose that $A,C$ are commutatively $(i',j')$-adjacent and $\{i,j\}\ne \{i',j'\}$. 
If $\{i,j\}\cap \{i',j'\}=\emptyset$, then 
$$Z_i=X_i\ne Y_i,\;\;Z_j=X_j\ne Y_j,\;\; Z_{i'}\ne X_{i'}=Y_{i'},\;\;Z_{j'}\ne X_{j'}=Y_{j'},$$
i.e. $Z_t\ne Y_t$ for four indices $t$ which means that $B,C$ are not adjacent.
Therefore, $\{i,j\}\cap \{i',j'\}$ is one-element.
Without loss of generality we assume that $i\ne i'$ and $j=j'$, i.e.
$A,C$ are commutatively $(i',j)$-adjacent.
Then 
$$Z_{i'}\ne X_{i'}=Y_{i'}\;\mbox{ and }\;Z_{i}=X_{i}\ne Y_{i},$$
and, since $B,C$ are commutatively adjacent, they are commutatively $(i,i')$-adjacent.

So, $A,C$ are commutatively $(i',j)$-adjacent and $B,C$ are commutatively $(i,i')$-adjacent.
This implies that $X_i=Z_i$ is ortho-adjacent to $Y_i$ and 
$X_j$ is ortho-adjacent to $Z_j=Y_j$.
Then $A,B$ are commutatively $(i,j)$-adjacent which contradicts our assumption.
\end{proof}

As in Lemma \ref{lemma3-1}, we suppose that $X_i\ne Y_i$, $X_j\ne Y_j$ and $X_t=Y_t$ if $t\ne i,j$.
For every $C\in {\mathcal G}(\sigma,d)$ commutatively adjacent to both $A,B$
the eigenspaces corresponding to $a_i$ and $a_j$ are ortho-adjacent to $X_i,Y_i$ and $X_j,Y_j$
(respectively) and the eigenspace corresponding to $a_t$, $t\ne i,j$ coincides with $X_t=Y_t$.
Denote by $V_{ij}$ the $(n_i+n_j)$-dimensional subspace $X_i+X_j=Y_i+Y_j$.
Recall that at least one of $n_i,n_j$ is finite.
If $n_i$ is finite, then
the distance between $X_i$ and $Y_i$ in $\Gamma^{\perp}_{n_i}(V_{ij})$ is equal to $2$
and one of the following possibilities is realised:
\begin{enumerate}
\item[(1)] $X_i,Y_i$ are adjacent and non-compatible,
\item[(2)] $\dim(X_i\cap Y_i)=n_i -2$.
\end{enumerate} 
Since $X_j,Y_j$ are the orthogonal complements of $X_i,Y_i$ in $V_{ij}$,
the same holds for the index $j$ 
(if $n_j=\infty$, then the possibility (2) is written as follows: the codimension of $X_j\cap Y_j$ in both $X_j,Y_j$
is equal to $2$). 
In the case (1), the operators $A,B$ are non-commutative and adjacent.
Since both $n_i,n_j$ are not less than $2$ and at least one of them is not less than  $3$, we have 
$$\dim V_{ij}=n_i+n_j\ge 5$$ 
and 
$$\min\{n_i,n_j\}\le n_i +n_j-3.$$
Lemma \ref{lemma2-4} implies the following.

\begin{lemma}\label{lemma3-2}
If $X_i\ne Y_i$, $X_j\ne Y_j$ and $X_t=Y_t$ for $t\ne i,j$,
then the following assertions are fulfilled:
\begin{enumerate}
\item[{\rm (1)}] 
In the case when $\min\{n_i,n_j\}\le n_{i}+n_{j}-4$, 
the operators $A,B$ are adjacent and non-commutative if and only if
there are infinitely many $C\in {\mathcal G}(\sigma,d)$ commutatively adjacent to both $A,B$ 
and such that there are infinitely many $C'\in {\mathcal G}(\sigma,d)$ commutatively adjacent to $A,B,C$.
\item[{\rm (2)}] In the case when $\min\{n_i,n_j\}=n_{i}+n_{j}-3$,
the operators $A,B$ are adjacent and non-commutative if and only if
there are infinitely many $C\in {\mathcal G}(\sigma,d)$ commutatively adjacent to both $A,B$ 
and such that there is precisely one $C'\in {\mathcal G}(\sigma,d)$ commutatively adjacent to $A,B,C$.
\end{enumerate}
\end{lemma}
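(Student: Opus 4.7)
The plan is to reduce the statement to Lemma \ref{lemma2-4} by restricting attention to the $(n_i+n_j)$-dimensional subspace $V_{ij}=X_i+X_j=Y_i+Y_j$ and working inside the ortho-Grassmann graph of its $n_i$-dimensional subspaces. Without loss of generality I assume $n_i\le n_j$, so that $\min\{n_i,n_j\}=n_i$ (the reverse case is symmetric via orthogonal complements inside $V_{ij}$). Since at most one of $n_i,n_j$ equals $2$ and both are at least $2$, the dimensional condition $4\le 2n_i\le n_i+n_j$ required to invoke Lemma \ref{lemma2-4} is automatically satisfied.

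First I would set up the correspondence. By Lemma \ref{lemma3-1}, any operator $C\in{\mathcal G}(\sigma,d)$ commutatively adjacent to both $A,B$ is commutatively $(i,j)$-adjacent to both; hence its eigenspace $Z_t$ for $a_t$ equals $X_t=Y_t$ whenever $t\ne i,j$, its eigenspace $Z_i$ for $a_i$ lies in $V_{ij}$ and is ortho-adjacent to both $X_i$ and $Y_i$ inside $\Gamma^{\perp}_{n_i}(V_{ij})$, and the eigenspace $Z_j$ for $a_j$ is the orthogonal complement of $Z_i$ in $V_{ij}$. Conversely, any $n_i$-dimensional subspace $Z\subset V_{ij}$ ortho-adjacent to both $X_i$ and $Y_i$ determines such a $C$ unambiguously, because orthogonal complementation preserves ortho-adjacency and the other eigenspaces are forced. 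Therefore $C\mapsto Z_i$ is a bijection between the operators commutatively adjacent to both $A,B$ and the $n_i$-dimensional subspaces of $V_{ij}$ ortho-adjacent to both $X_i$ and $Y_i$.

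Next I would verify that this bijection transports the commutative-adjacency relation: two such $C,C'$ are commutatively adjacent if and only if the corresponding subspaces $Z_i,Z'_i$ are ortho-adjacent in $V_{ij}$. Since $C$ and $C'$ agree at every index $t\ne i,j$, the only possible index pair for their adjacency is $(i,j)$, and commutative $(i,j)$-adjacency of $C,C'$ translates directly into ortho-adjacency of $Z_i$ and $Z'_i$ (the analogous condition on the complements $Z_j,Z'_j$ follows automatically). Under this bijection, the distance-$2$ hypothesis for $A,B$ in $\Gamma^c(\sigma,d)$ becomes a distance-$2$ condition for $X_i,Y_i$ in $\Gamma^{\perp}_{n_i}(V_{ij})$, and $A,B$ are adjacent and non-commutative precisely when $X_i,Y_i$ are adjacent and non-compatible inside $V_{ij}$.

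Finally, the two cases of the lemma match the two cases of Lemma \ref{lemma2-4} exactly: the inequality $n_i\le n_i+n_j-4$ is the condition $k\le \dim H-4$ with $k=n_i$ and $\dim H=n_i+n_j$, while the equality $n_i=n_i+n_j-3$ is the condition $k=\dim H-3$. Applying Lemma \ref{lemma2-4} inside $V_{ij}$ and translating the quantifiers back through the bijection gives the claim. The step I expect to be most delicate is the verification that the bijection preserves commutative adjacency in both directions, which requires pinning down that the index pair of any commutative adjacency between two such $C,C'$ must be $\{i,j\}$ and that ortho-adjacency of $Z_i$ with $Z'_i$ inside $V_{ij}$ automatically forces ortho-adjacency of their orthogonal complements $Z_j,Z'_j$.
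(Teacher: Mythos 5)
Your proposal is correct and follows essentially the same route as the paper: the paper likewise reduces Lemma \ref{lemma3-2} to Lemma \ref{lemma2-4} by passing to the subspace $V_{ij}=X_i+X_j=Y_i+Y_j$ and identifying operators commutatively adjacent to both $A,B$ with $n_i$-dimensional subspaces of $V_{ij}$ ortho-adjacent to both $X_i,Y_i$, using $\min\{n_i,n_j\}\le n_i+n_j-3$ (guaranteed by the hypothesis that at most one $n_t$ equals $2$). Your explicit verification that the bijection transports commutative adjacency to ortho-adjacency is a detail the paper leaves implicit, but it is the same argument.
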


\begin{lemma}\label{lemma3-3}
If $X_i\ne Y_i$ for three or four indices $i$, then the number of operators 
$C\in{\mathcal G}(\sigma,d)$ commutatively adjacent to both $A,B$ is finite.
\end{lemma}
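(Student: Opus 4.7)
The plan is to enumerate the possible configurations of $C$ according to the pair of indices $\{\alpha,\beta\}$ where $C$ differs from $A$ and the pair $\{\gamma,\delta\}$ where $C$ differs from $B$, and then to show that each configuration produces at most one candidate $C$.

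Let $s = |\{i : X_i \ne Y_i\}| \in \{3,4\}$ and list the relevant indices as $i_1,\ldots,i_s$. For any $C \in {\mathcal G}(\sigma,d)$ commutatively adjacent to both $A$ and $B$, denote its eigenspaces by $Z_t$. By the definition of $(\alpha,\beta)$-adjacency, there is a pair $\{\alpha,\beta\}$ with $Z_t = X_t$ for $t \notin \{\alpha,\beta\}$, and analogously a pair $\{\gamma,\delta\}$ with $Z_t = Y_t$ for $t \notin \{\gamma,\delta\}$. Any $t \notin \{\alpha,\beta\} \cup \{\gamma,\delta\}$ then satisfies $X_t = Y_t$, forcing $\{i_1,\ldots,i_s\} \subseteq \{\alpha,\beta\} \cup \{\gamma,\delta\}$. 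Since the latter has at most four elements, only finitely many configurations arise.

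When $s = 4$, the inclusion forces $\{\alpha,\beta\} \cup \{\gamma,\delta\} = \{i_1,\ldots,i_4\}$; as both sides are two-subsets of a four-set, they must partition it. There are $\binom{4}{2}=6$ such partitions, and each uniquely specifies $C$ via $Z_t = Y_t$ on $\{\alpha,\beta\}$ and $Z_t = X_t$ on $\{\gamma,\delta\}$, giving at most six candidates. When $s = 3$, the equality $\{\alpha,\beta\} = \{\gamma,\delta\}$ is impossible: the unique $\ell \in \{i_1,i_2,i_3\} \setminus \{\alpha,\beta\}$ would then satisfy $Z_\ell = X_\ell = Y_\ell$, contradicting $\ell \in \{i_1,i_2,i_3\}$. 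Thus $|\{\alpha,\beta\} \cap \{\gamma,\delta\}| = 1$ with union $\{i_1,i_2,i_3\}$, again yielding only finitely many configurations. Letting $\mu$ denote the shared index and $\nu \in \{\alpha,\beta\}\setminus\{\mu\}$, $\rho \in \{\gamma,\delta\}\setminus\{\mu\}$, the previous step forces $Z_\nu = Y_\nu$ and $Z_\rho = X_\rho$, while the $(\alpha,\beta)$-adjacency to $A$ imposes $Z_\nu + Z_\mu = X_\nu + X_\mu$, which together with mutual orthogonality of the eigenspaces of $C$ pins down $Z_\mu$ as the orthogonal complement of $Y_\nu$ in $X_\nu + X_\mu$. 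Hence $C$ is uniquely determined by the configuration (or does not exist, if the symmetric identity $Z_\rho + Z_\mu = Y_\rho + Y_\mu$ coming from the $(\gamma,\delta)$-adjacency to $B$ is incompatible).

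The principal technical point, and the main obstacle, is verifying uniqueness in the $s = 3$ case: showing that the two identities $Z_\nu + Z_\mu = X_\nu + X_\mu$ and $Z_\rho + Z_\mu = Y_\rho + Y_\mu$, combined with the already-determined $Z_\nu$ and $Z_\rho$, leave at most one choice for $Z_\mu$. Once this is in hand, the count per configuration is at most one, and finiteness of the total number of configurations yields the lemma.
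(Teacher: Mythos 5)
Your argument is essentially the paper's: classify each $C$ by the pair of indices at which it differs from $A$ and the pair at which it differs from $B$, obtain at most six partitions when four eigenspaces differ, and when three differ pin down the single undetermined eigenspace $Z_\mu$ as an orthogonal complement inside $X_\nu+X_\mu$ using the mutual orthogonality of the eigenspaces of $C$. One small step is asserted rather than proved: after excluding $\{\alpha,\beta\}=\{\gamma,\delta\}$ you conclude $|\{\alpha,\beta\}\cap\{\gamma,\delta\}|=1$ with union exactly $\{i_1,i_2,i_3\}$, but you have not excluded disjoint pairs whose union picks up an extra index $e\notin\{i_1,i_2,i_3\}$; this is ruled out (as the paper does explicitly) because $(\alpha,\beta)$-adjacency to $A$ forces $Z_e\ne X_e=Y_e$ while $e\notin\{\gamma,\delta\}$ forces $Z_e=Y_e$ --- and even if it were not ruled out, a disjoint configuration determines every eigenspace of $C$ and $k$ is finite, so finiteness would survive. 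Your closing paragraph presents the uniqueness of $Z_\mu$ as a remaining obstacle, but you had already settled it two sentences earlier, so that caveat can simply be deleted.
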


\begin{proof}
Suppose that $X_i\neq Y_i$ precisely for $i\in\{1,2,3,4\}$.
If $C\in{\mathcal G}(\sigma,d)$ is commutatively adjacent to both $A,B$,
then it is $(s,t)$-adjacent to $A$  for some $s,t\in\{1,2,3,4\}$
and $(s',t')$-adjacent to $B$ for $\{s',t'\}=\{1,2,3,4\}\setminus \{s,t\}$
(otherwise there is $i\in \{1,2,3,4\}$ such that $X_i=Y_i$). 
This means that the eigenspace of $C$ corresponding to $a_{j}$ with $j\in \{s',t'\}$ is $X_j$,
the eigenspace of $C$ corresponding to $a_{j}$, $j\in \{s,t\}$ is $Y_j$ 
and the eigenspaces of $C$ associated to the remaining $a_t$ are $X_t=Y_t$.
So, there are at most six operators $C\in{\mathcal G}(\sigma,d)$ commutatively adjacent to both $A,B$.
  
Consider the case when $X_i\neq Y_i$ precisely for $i\in\{1,2,3\}$.
Suppose that $C\in{\mathcal G}(\sigma,d)$ is $(s,t)$-adjacent to $A$ and $(s',t')$-adjacent to $B$.
If $t\not\in \{1,2,3\}$, then the eigenspace of $C$ corresponding to $a_{t}$ is distinct from $X_t=Y_{t}$,
and consequently, $t\in \{s',t'\}$. Without loss of generality we assume that $t=t'$.
Then for $j\in\{1,2,3\}\setminus\{s,s'\}$ we have $X_j=Y_j$ which is impossible. 
So, $s,s',t,t'$ belong to $\{1,2,3\}$. 
This means that $C$ is $(s,j)$-adjacent to $B$ or $(t,j)$-adjacent to $B$
for $\{s,t,j\}=\{1,2,3\}$.

If $C$ is $(s,j)$-adjacent to $B$, then the eigenspace of $C$ corresponding to $a_t$ is $Y_t$.
Since $C$ is $(s,t)$-adjacent to $A$, the eigenspace of $C$ corresponding to $a_j$ is $X_j$.
Then $X_j,Y_t$ are orthogonal and the eigenspace of $C$ corresponding to $a_s$ is
the orthogonal complement of $X_j+Y_t$ in
$$X_1+X_2+X_3=Y_1+Y_2+Y_3.$$
Therefore, there is at most one $C\in{\mathcal G}(\sigma,d)$
which is $(s,t)$-adjacent to $A$ and $(s,j)$-adjacent to $B$.
Similarly, there exists at most one $C\in{\mathcal G}(\sigma,d)$
which is $(s,t)$-adjacent to $A$ and $(t,j)$-adjacent to $B$.

Since there are precisely three distinct $2$-element subsets of $\{1,2,3\}$,
we can get at most six operators $C\in{\mathcal G}(\sigma,d)$
commutatively adjacent to both $A,B$.
\end{proof}

Combining Lemmas \ref{lemma3-2} and \ref{lemma3-3}, we obtain the following characterisation of
adjacency in terms of commutative adjacency. 

\begin{lemma}\label{lemma3-4}
Suppose that the distance between $A,B\in{\mathcal G}(\sigma,d)$ in $\Gamma^c(\sigma,d)$ is equal to $2$.
Then $A,B$ are adjacent and non-commutative if and only if one of the following possibilities is realized:
\begin{enumerate}
\item[{\rm (1)}] 
There are infinitely many $C\in {\mathcal G}(\sigma,d)$ commutatively adjacent to both $A,B$ 
and such that there are infinitely many $C'\in {\mathcal G}(\sigma,d)$ commutatively adjacent to $A,B,C$.
\item[{\rm (2)}] 
There are infinitely many $C\in {\mathcal G}(\sigma,d)$ commutatively adjacent to both $A,B$ 
and such that there is precisely one $C'\in {\mathcal G}(\sigma,d)$ commutatively adjacent to $A,B,C$.
\end{enumerate}
\end{lemma}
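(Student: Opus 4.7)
The plan is to prove both directions by a case analysis on the number of indices at which the eigenspaces of $A$ and $B$ differ, combining Lemmas \ref{lemma3-1}--\ref{lemma3-3} with Lemmas \ref{lemma2-1} and \ref{lemma2-2} from Section 4. For notational convenience, set $V_{ij}=X_i+X_j=Y_i+Y_j$ whenever $A$ and $B$ differ at exactly the two indices $i,j$.

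For the forward direction, suppose $A,B$ are adjacent and non-commutative. Then ${\rm rank}(A-B)=2$ together with condition (A2) forces $A$ and $B$ to be $(i,j)$-adjacent for some distinct $i,j$, so $X_i\ne Y_i$, $X_j\ne Y_j$, and $X_t=Y_t$ for $t\ne i,j$. Non-commutativity is equivalent to the non-compatibility of $X_i$ and $Y_i$ as $n_i$-dimensional subspaces of $V_{ij}$. The hypothesis that at most one $n_l$ equals $2$ guarantees $\max\{n_i,n_j\}\ge 3$, which means either $\min\{n_i,n_j\}\le n_i+n_j-4$ or $\min\{n_i,n_j\}=n_i+n_j-3$; in either case Lemma \ref{lemma3-2} yields condition (1) or (2) of the statement.

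For the backward direction, assume condition (1) or (2) holds. In both cases there are infinitely many $C\in{\mathcal G}(\sigma,d)$ commutatively adjacent to both $A,B$, so Lemma \ref{lemma3-3} forces $A$ and $B$ to differ at exactly two indices $i,j$. By Lemma \ref{lemma3-1}, every such $C$ is commutatively $(i,j)$-adjacent to both $A,B$, and its eigenspace corresponding to $a_i$ is an $n_i$-dimensional subspace of $V_{ij}$ ortho-adjacent to both $X_i$ and $Y_i$. Hence $X_i,Y_i$ are at distance $2$ in $\Gamma^{\perp}_{n_i}(V_{ij})$, and the discussion opening Section 4 leaves exactly two possibilities: either $X_i,Y_i$ are adjacent and non-compatible---in which case $A,B$ are adjacent and non-commutative and we are done---or $\dim(X_i\cap Y_i)=n_i-2$.

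The main obstacle is to exclude the second possibility. If $X_i,Y_i$ are compatible with $(n_i-2)$-dimensional intersection, then Lemma \ref{lemma2-1}, applied inside $V_{ij}$, provides exactly two ortho-adjacent common neighbours of $X_i,Y_i,C_i$ for every such $C$; that is, exactly two $C'$ per $C$. If instead $X_i,Y_i$ are non-compatible, then the contrapositive of Lemma \ref{lemma2-2} rules out the existence of any pair of ortho-adjacent common ortho-neighbours of $X_i,Y_i$, yielding zero $C'$ per $C$. Neither outcome is consistent with condition (1), which demands infinitely many $C'$, or with condition (2), which demands precisely one $C'$. This contradiction excludes $\dim(X_i\cap Y_i)=n_i-2$ and completes the proof.
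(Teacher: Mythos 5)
Your proof is correct and follows the paper's intended route: the paper disposes of this lemma in one line by ``combining Lemmas \ref{lemma3-2} and \ref{lemma3-3}'', and you reconstruct exactly that argument, reducing via Lemmas \ref{lemma3-3} and \ref{lemma3-1} to the two-index case inside $V_{ij}$ and then invoking the dichotomy behind Lemma \ref{lemma2-4}. Your explicit treatment of the cross-cases (e.g.\ condition (2) holding when $\min\{n_i,n_j\}\le n_i+n_j-4$), handled by returning to Lemmas \ref{lemma2-1} and \ref{lemma2-2} to get the ``exactly $0$ or exactly $2$'' count in the $\dim(X_i\cap Y_i)=n_i-2$ case, is a detail the paper leaves implicit, and you fill it in correctly.
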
 

Lemma \ref{lemma3-4} shows that every automorphism of $\Gamma^c(\sigma,d)$ is
an automorphism of $\Gamma(\sigma,d)$ and 
Theorem \ref{theorem-PPZ} gives the claim.

\end{document}